\numberwithin{equation}{section}
\renewcommand*{\eqref}[1]{%
  \hyperref[{#1}]{\textup{\tagform@{\ref*{#1}}}}%
}
\tikzset{%
    symbol/.style={%
        draw=none,
        every to/.append style={%
            edge node={node [sloped, allow upside down, auto=false]{$#1$}}}
    }
}
\tikzset{
    rot90/.style={anchor=south, rotate=90, inner sep=.5mm}
}
\tikzset{
    rot320/.style={anchor=south, rotate=320, inner sep=.5mm}
}
\declaretheorem[name=Theorem, numberwithin=section]{theorem}
\declaretheorem[name=Theorem, numbered=no]{theorem*}
\declaretheorem[name=Theorem]{theoremA}
\declaretheorem[name=Lemma, sibling=theorem]{lemma}
\declaretheorem[name=Proposition, sibling=theorem]{proposition}
\declaretheorem[style=definition, name=Definition, sibling=theorem]{definition}
\declaretheorem[style=definition, name=Remark, sibling=theorem]{remark}
\declaretheorem[style=definition, name=Example, sibling=theorem]{example}
\DeclareMathOperator{\Fun}{Fun}
\DeclareMathOperator*{\colim}{colim}
\DeclareMathOperator{\id}{id}
\DeclareMathOperator{\Ind}{Ind}
\DeclareMathOperator{\MapSF}{\Map_{\sSF}}
\DeclareMathOperator{\MapTop}{\Map_{\Top}}
\DeclareMathOperator{\MapTopp}{\Map_{\Top_*}}
\DeclareMathOperator{\MapsSet}{\Map}
\DeclareMathOperator{\MapsSetp}{\Map_*}
\DeclareMathOperator{\MapSpW}{\Map_{\SpW}}
\DeclareMathOperator{\Indinfty}{\Ind_{\infty}}
\DeclareMathOperator{\Ob}{Ob}
\DeclareMathOperator{\Map}{Map}
\DeclareMathOperator{\Sp}{Sp}
\DeclareMathOperator{\SingTop}{Sing}
\DeclareMathOperator{\hcN}{N_{\mathrm{hc}}}
\newcommand{\op}[1]{\overline{#1}}
\newcommand{\bbN}{\mathbb N}
\newcommand{\bbR}{\mathbb R}
\newcommand{\bbC}{\mathbb C}
\newcommand{\bfD}{\mathbf D}
\newcommand{\bfT}{\mathbf T}
\newcommand{\bbU}{\mathbb U}
\newcommand{\bbT}{\mathbb T}
\newcommand{\bbPT}{\mathbb{PT}}
\newcommand{\bbSU}{\mathbb{SU}}
\newcommand{\s}{\mathbf s}
\newcommand{\Set}{\mathbf{Set}}
\newcommand{\Top}{\mathbf{Top}}
\newcommand{\CAlg}{\mathbf{C}^*}
\newcommand{\cCAlg}{\mathbf{c}\CAlg}
\newcommand{\CHaus}{\mathbf{CH}}
\newcommand{\MAlgu}{\mathbf{M}}
\newcommand{\NCWf}{\mathbf{NCW}_f}
\newcommand{\CWf}{\mathbf{CW}_f}
\newcommand{\pfinsSet}{\s\Set_*^{\mathrm{fin}}}
\newcommand{\calM}{\mathcal M}
\newcommand{\scalM}{\mathcal M_\Delta}
\newcommand{\tcalM}{\mathcal M_\Top}
\newcommand{\SpW}{\mathbf{Sp}^\mathscr{W}}
\newcommand{\sSF}{\mathbf{SF}}
\newcommand{\sSFlin}{\mathbf{SF}_l}
\newcommand{\SF}{\mathrm{SF}}
\newcommand{\SFlin}{\SF_l}
\newcommand{\oset}[3][0ex]{%
  \mathrel{\mathop{#3}\limits^{
    \vbox to#1{\kern-2\ex@
    \hbox{$\scriptstyle#2$}\vss}}}}
\newcommand{\cofarrow}{\rightarrowtail}
\newcommand{\wearrow}{\oset[-.16ex]{\sim}{\longrightarrow}}
\newcommand{\fibarrow}{\twoheadrightarrow}
\renewcommand{\subset}{\subseteq}
\mathchardef\mhyphen="2D
\title{A note on noncommutative CW-spectra}
\author{Thomas Blom}
\date{\today}
\begin{document}

\maketitle
\begin{abstract}
    We use the machinery of \cite{BlomMoerdijk2020SimplicialProV1} to give an alternative proof of one of the main results of \cite{AroneBarneaSchlank2021SpectralPresheavesV2}. This result states that the category of noncommutative CW-spectra can be modeled as the category of spectral presheaves on a certain category $\calM$, whose objects can be thought of as ``suspension spectra of matrix algebras''. The advantage of our proof is that it mainly relies on well-known results on (stable) model categories.
\end{abstract}

\section{Introduction}

The well-known Gelfand duality theorem states that the category of pointed compact Hausdorff spaces is dual to the category of commutative $C^*$-algebras. This result motivates the philosophy that general $C^*$-algebras are dual to ``noncommutative spaces''. A natural question arising from this philosophy is what the homotopy theory of $C^*$-algebras (or noncommutative spaces) should look like, and a natural follow-up question is then to describe their stable homotopy theory. There have been many approaches to answering these questions, see e.g.\ \cite{Thom2003ConnectiveBivariant}, \cite{Ostvaer2010HomotopyCstar}, \cite{Uuye2013Homotopical}, \cite{BarneaJoachimMahanta2017ModelStructureProjective}.

The approach taken by Arone--Barnea--Schlank in \cite{AroneBarneaSchlank2021SpectralPresheavesV2} is similar to how one constructs the homotopy theory of (pointed) CW-complexes and its stabilization, but with the role of the $0$-sphere $S^0$ as the basic building block replaced by the finite-dimensional matrix algebras $\{M_n\}_{n \geq 1}$. In particular, they construct an $\infty$-category of \emph{noncommutative CW-complexes} and its stabilization, the $\infty$-category of \emph{noncommutative CW-spectra}. One of their main results is then the following theorem.

\begin{theoremA}[\cite{AroneBarneaSchlank2021SpectralPresheavesV2}]\label{theoremA}
    There exists a symmetric monoidal spectrum-enriched category $\mathcal{M}_s$, whose objects can be thought of as suspension spectra of matrix algebras, such that the category of spectral presheaves on $\mathcal{M}_s$ models the symmetric monoidal $\infty$-category of noncommutative CW-spectra when equipped with the projective model structure and the Day convolution product.
\end{theoremA}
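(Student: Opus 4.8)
The plan is to realize the category of noncommutative CW-spectra as a cofibrantly generated, spectrally enriched stable model category, to exhibit the suspension spectra of the matrix algebras as a set of compact generators, and then to invoke the Schwede–Shipley recognition theorem (``stable model categories are categories of modules'') to identify this category with modules, i.e.\ spectral presheaves, over the spectral endomorphism category on these generators. The symmetric monoidal refinement is then extracted from the universal property of Day convolution. The point of using the machinery of \cite{BlomMoerdijk2020SimplicialProV1} is precisely that it produces all of this at the level of model categories, so that the classical recognition theorem applies verbatim rather than requiring an ad hoc $\infty$-categorical argument.

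First I would present noncommutative CW-spectra as the underlying $\infty$-category of a combinatorial stable model category $\SpW$, obtained by stabilizing the pointed model category of noncommutative CW-complexes produced by the pro-simplicial framework of \cite{BlomMoerdijk2020SimplicialProV1}. The key inputs are that the matrix algebras $M_n$ are cofibrant and homotopically compact, so that their suspension spectra $\Sigma^\infty M_n$ are compact objects of $\SpW$, and that they generate, i.e.\ an object $X$ is weakly trivial precisely when $\MapSpW(\Sigma^\infty M_n, X)$ vanishes for all $n$. Verifying compact generation is largely a matter of transporting the cell structure of noncommutative CW-complexes through the stabilization functor, using that $\Sigma^\infty$ preserves compactness and that every cell is built out of the $M_n$.

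Next I would let $\calM_s$ be the spectrum-enriched full subcategory on the objects $\Sigma^\infty M_n$, with mapping spectra the derived $\MapSpW(\Sigma^\infty M_m, \Sigma^\infty M_n)$. The compact generation established above, together with the Schwede–Shipley theorem, then yields a zig-zag of Quillen equivalences between $\SpW$ and the category of $\calM_s$-modules, which is exactly the category of spectral presheaves on $\calM_s$ equipped with the projective model structure. This identification is the formal, ``well-known'' part of the argument: once the model-categorical hypotheses are in place, the Morita-theoretic conclusion is automatic.

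It remains to upgrade this to a symmetric monoidal equivalence. The tensor product of $C^*$-algebras restricts on matrix algebras to $M_m \otimes M_n \cong M_{mn}$ with unit $M_1 = \bbC$, which equips $\calM_s$ with a symmetric monoidal spectrum-enriched structure making the inclusion of generators into $\SpW$ symmetric monoidal. Since the presheaf category under Day convolution is the free symmetric monoidal, stable, presentable cocompletion of $\calM_s$, this symmetric monoidal inclusion extends uniquely to a symmetric monoidal colimit-preserving functor out of the presheaf category, and one identifies this extension with the Morita equivalence above. I expect the main obstacle to be exactly this last compatibility: one must check that the smash product on noncommutative CW-spectra is genuinely the colimit-extension of $\otimes$ from the matrix algebras, and not merely a product that happens to agree with it on generators. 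Concretely this requires that smashing preserves colimits in each variable and that the unit is $\Sigma^\infty \bbC$. Together with compact generation, this forms the technical heart of the proof, while the recognition-theoretic identification itself is formal.
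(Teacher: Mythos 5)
Your overall strategy---stabilize the model category of noncommutative CW-complexes, check that the $\Sigma^\infty \op{M_n}$ are compact generators, apply a Schwede--Shipley-type recognition theorem, and use the universal property of Day convolution for the monoidal refinement---is exactly the strategy of the paper. However, the two steps you describe as ``formal'' or applying ``verbatim'' are precisely where the paper has to do nontrivial work, and as written your plan stalls there. First, the recognition theorem needs a model category that is genuinely \emph{enriched} in a monoidal model category of spectra, and the evident stabilization $\Sp(\Ind(\bfD))$ by sequential spectra carries no such enrichment; the paper instead stabilizes in the style of Lydakis, using linear simplicial functors $\pfinsSet \to \Ind(\bfD)$, and proves by hand (\Cref{prop:SFlinEnrichment}) that $\SFlin(\Ind(\bfD))$ is enriched over Lydakis' monoidal model category $\sSFlin$. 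You posit a ``spectrally enriched stable model category'' without constructing the enrichment, and this is not automatic. Second, taking the hom-objects of $\mathcal{M}_s$ to be \emph{derived} mapping spectra undercuts the statement being proved: the content of the theorem is that the presheaf category on the \emph{explicit} category of Arone--Barnea--Schlank, with strict hom-spectra $K \mapsto \MapsSetp(\op{M_n}, \op{M_k} \otimes K)$, models noncommutative CW-spectra. Since the objects $\Sigma^\infty \op{M_n}$ are cofibrant but \emph{not} fibrant in $\SFlin(\Ind(\bfD))$, the strict and derived hom-spectra do not agree for free, and the fibrancy hypothesis of the recognition theorem (in the Guillou--May form) literally fails. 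The paper's \Cref{lemma:bbUAlreadyDerived} shows that $\bbU(\Sigma^\infty\op{M_n}) \to \bbR\bbU(\Sigma^\infty\op{M_n})$ is nevertheless an equivalence, because $\Sigma^\infty\op{M_n}$ is a pointwise fibrant homotopy functor whose fibrant replacement is the usual stabilization colimit; this step is missing from your plan.

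On the monoidal side you correctly identify the danger, namely that the tensor product on $\Ind(\bfD)$ must preserve colimits in each variable, but you do not draw the consequence: the subcategory of $C^*$-algebras must be chosen so that the tensor product preserves finite limits (equivalently, finite colimits of $\bfD$). This fails for the minimal and maximal tensor products of general separable $C^*$-algebras and forces the restriction to \emph{nuclear} separable $C^*$-algebras, where Pedersen's theorem guarantees preservation of pullbacks. Without pinning down this hypothesis the symmetric monoidal part of the statement cannot be established along the lines you propose.
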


Their proof uses spectrum-enriched $\infty$-categories and an $\infty$-categorical version of a theorem by Schwede--Shipley \cite[Thm.\ 3.9.3.(iii)]{SchwedeShipley2003ModelCatsAreModuleCats}. It was suggested in \cite[Rem.\ 1.4]{AroneBarneaSchlank2021SpectralPresheavesV2} that the techniques of \cite{BlomMoerdijk2020SimplicialProV1} allow for a more direct proof of this theorem, using standard results on stable model categories. The aim of this note is to present such a proof. In particular, we show that the result above can be proved without the use of enriched $\infty$-categories, avoiding the technicalities that such an approach comes with. Furthermore, our proof has the added benefit of producing convenient model categories of noncommutative CW-complexes and noncommutative CW-spectra.

A rough outline of our proof is as follows: We first show that for a suitably chosen subcategory $\bfD^{\mathrm{op}}$ of the category of $C^*$-algebras, such as the category of separable $C^*$-algebras, one can endow $\Ind(\bfD)$ with a model structure whose cofibrant objects are exactly the (retracts of) noncommutative CW-complexes. This model category is then stabilized by considering a certain model structure on the category of functors from $\pfinsSet$ to $\Ind(\bfD)$. This produces a model category enriched over Lydakis's stable model category of simplicial functors \cite{Lydakis1998SimplicialFunctors}, and one can then apply a modified version of \cite[Thm.\ 3.9.3.(iii)]{SchwedeShipley2003ModelCatsAreModuleCats} to conclude \Cref{theoremA}.

The techniques of \cite{BlomMoerdijk2020SimplicialProV1} are used to construct the model category of noncommutative CW-complexes in \Cref{ssec:ModelCategoryNCW}. For the convenience of the reader, we discuss (a simplification of) these techniques in the \hyperref[appendix:minimal]{appendix}.

Throughout this note, we use the convention that all topological spaces are compactly generated weak Hausdorff.

\paragraph{Notation.} The categories considered in this paper often admit several useful enrichments. To avoid confusion, we will generally write $\Map_{\mathbf{V}}(-,-)$ to denote the hom-objects of a $\mathbf{V}$-enriched category, including the base of enrichment in the notation. For brevity, we will denote a simplicial enrichment by $\MapsSet(-,-)$ and an enrichment in pointed simplicial sets by $\MapsSetp(-,-)$.

\paragraph{Acknowledgements.} The author would like to thank Floris Elzinga and Makoto Yamashita for bringing to his attention a counterexample to the claim that the maximal tensor product of separable $C^*$-algebras preserves pullbacks.

\section{Noncommutative CW-complexes}

We start this section with a brief introduction to the category of $C^*$-algebras and its subcategories that we will be interested in. We then show how to construct a model category out of such a subcategory that describes the homotopy theory of noncommutative CW-complexes. Finally, we discuss the symmetric monoidal structure on this model category induced by the minimal and maximal tensor product of $C^*$-algebras.

\subsection{The category of \texorpdfstring{$C^*$}{C*}-algebras}
Let $\CAlg$ denote the category of (not necessarily unital) $C^*$-algebras and $*$-homomorphisms. The Gelfand duality theorem states that the category of pointed compact Hausdorff spaces $\CHaus_*$ is dual to the category of \emph{commutative} $C^*$-algebras $\cCAlg$, with the equivalence in the direction $\CHaus_* \to (\cCAlg)^{\mathrm{op}}$ given by sending a pointed space $(X,x)$ to the algebra $C_0(X)$ of continuous basepoint preserving functions $(X,x) \to (\bbC,0)$.

As described in \cite[Rem.\ 2.5]{Uuye2013Homotopical}, the category $\CAlg$ of $C^*$-algebras is enriched over the category $\Top_*$ of (pointed) compactly generated weak Hausdorff spaces and admits cotensors by pointed compact Hausdorff spaces. Explicitly, for a $C^*$-algebra $B$ and a pointed compact Hausdorff space $(X,x)$, the cotensor $B^X$ in $\CAlg$ is defined as the $C^*$-algebra of continuous basepoint preserving maps $(X,x) \to (B,0)$ endowed with the supremum norm. We view $\CAlg$ as enriched in $\Top$ by forgetting the basepoints of the hom-spaces. The cotensor $B^Y$ of a $C^*$-algebra $B$ by an \emph{unpointed} compact Hausdorff space $Y$ also exists and is given by the $C^*$-algebra of \emph{all} continuous maps $Y \to B$ (cf. \cite[Lem.\ 2.4]{Uuye2013Homotopical}). Equivalently, it is the cotensor of $B$ by the pointed compact Hausdorff space $Y_+$.

Throughout the rest of this paper, we let $\bfD^{\mathrm{op}}$ be a full subcategory of $\CAlg$ satisfying the following properties:\footnote{We write $\bfD^{\mathrm{op}}$ for this subcategory since we will mainly work with its opposite category $\bfD$ below.}
\begin{enumerate}[(D1)]
	\item\label{D1} The category $\bfD^{\mathrm{op}}$ is essentially small.
	\item\label{D2} For every $n \geq 1$, the $C^*$-algebra $M_n$ of $n$-by-$n$ matrices is contained in $\bfD^{\mathrm{op}}$.
	\item\label{D3} For any $A \in \bfD^{\mathrm{op}}$, the cotensor $A^I$ by the unit interval $I$ is also an object of $\bfD^{\mathrm{op}}$.
	\item\label{D4} The full subcategory $\bfD^{\mathrm{op}}$ is closed under finite limits.
\end{enumerate}

Examples of subcategories $\bfD^{\mathrm{op}}$ to keep in mind are those of all separable $C^*$-algebras and those of all separable $C^*$-algebras that are furthermore nuclear. Note that separability of a $C^*$-algebra $A$ implies that there exists a countable subset $Z \subset A$ together with a surjection $Z^\bbN \to A$, hence the cardinality of a separable $C^*$-algebra is at most $2^{\aleph_0}$. This shows that \ref{D1} must hold for these two examples. Furthermore, finite-dimensional $C^*$-algebras are always separable and nuclear, hence \ref{D2} holds as well. We leave it as an exercise to the reader to verify \ref{D3}. Property \ref{D4} follows since the terminal object is clearly nuclear and separable, and both the subcategories of nuclear and of separable $C^*$-algebras are closed under pullbacks by \cite[Rem.\ 3.5]{Pedersen1999PullbackPushout}.

\begin{remark}\label{remark:MinimalD}
	It is worth pointing out that, up to equivalence, there is a minimal choice of a full subcategory $\CAlg$ satisfying \ref{D1}-\ref{D4}. Namely, let $\mathcal{D}$ be the class of all full subcategories of $\CAlg$ satisfying \ref{D1}-\ref{D4} and that are furthermore isomorphism-closed. Then the intersection
	\[\bfD^{\mathrm{op}}_{\mathrm{min}} := \bigcap_{\bfD^{\mathrm{op}} \in \mathcal{D}} \bfD^{\mathrm{op}}\]
	again satisfies \ref{D1}-\ref{D4} and is, up to equivalence, the smallest such full subcategory.
\end{remark}

\subsection{The model category of noncommutative CW-complexes}\label{ssec:ModelCategoryNCW}

We will now construct a model category describing the homotopy theory of noncommutative CW-complexes. Let $\bfD$ denote the opposite category of a full subcategory $\bfD^{\mathrm{op}}$ of $\CAlg$ satisfying \ref{D1}-\ref{D4}. For a $C^*$-algebra $A$ in $\bfD^{\mathrm{op}}$, we will write $\op{A}$ for the corresponding object in $\bfD$ in an attempt to avoid confusion. In particular, since cotensors are formally dual to tensors, we see that $\op{A} \otimes X = \op{A^X}$ for all cotensors $A^X$ that $\bfD^{\mathrm{op}}$ admits. Let $\MAlgu \subset \Ob(\bfD)$ denote the set of matrix algebras; that is, $\MAlgu = \{\op{M_n}\}_{n \geq 1}$.

It is easy to verify that the cotensor functor $(\mhyphen)^I$ preserves finite limits, so $\bfD$ satisfies the properties spelled out in \Cref{appendix:exampleTopological}. In particular, the category $\bfD$ together with the set of objects $\MAlgu$ is a minimal cofibration test category in the sense of \Cref{appendix:definition}, with its simplicial hom-sets defined by $\MapsSet(\op{A},\op{B}) := \SingTop (\MapTop(B,A))$. By combining \Cref{appendix:mainthm} and \Cref{appendix:remarkTopologicalGenCofibs}, we obtain the following result.

\begin{theorem}\label{theorem:TheModelStructureUnstable}
	Let $\bfD$ be as above. There exists a cofibrantly generated simplicial model structure on $\Ind(\bfD)$ such that
	\begin{enumerate}[(i)]
		\item \label{theorem:TheModelStructureUnstable:item1} a map $C \to D$ is a weak equivalence or fibration if and only if for every $n \geq 1$, the map
		\[\MapsSet(\op{M_n},C) \to \MapsSet(\op{M_n},D)\]
		is a weak equivalence or Kan fibration, respectively,
		\item \label{theorem:TheModelStructureUnstable:item2} any object of $\Ind(\bfD)$ is fibrant,
		\item \label{theorem:TheModelStructureUnstable:item3} a set of generating cofibrations is given by
		\[\{\op{M_n} \otimes \partial D^k \to \op{M_n} \otimes D^k \mid n \geq 1,\quad k \geq 0 \} \]
		and a set of generating trivial cofibrations by
		\[\{\op{M_n} \otimes (D^k \times \{0\}) \to \op{M_n} \otimes (D^k \times I) \mid n \geq 1,\quad k \geq 0 \},\]
		\item \label{theorem:TheModelStructureUnstable:item4} the weak equivalences are stable under filtered colimits.
	\end{enumerate}
\end{theorem}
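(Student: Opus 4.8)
The plan is to deduce everything from the general existence theorem of the \hyperref[appendix:minimal]{appendix}: the discussion preceding the statement has already identified $(\bfD,\MAlgu)$ as a minimal cofibration test category, so the work consists of confirming that the ambient category is suitable and then reading off the four assertions from \Cref{appendix:mainthm} and its topological refinement \Cref{appendix:remarkTopologicalGenCofibs}.

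First I would record the structural input. Since $\bfD^{op}$ is essentially small by \ref{D1} and closed under finite limits by \ref{D4}, the opposite category $\bfD$ is essentially small with finite colimits, so $\Ind(\bfD)$ is locally finitely presentable; in particular it is bicomplete and each object of $\bfD$ — notably each $\op{M_n}$ — is compact in $\Ind(\bfD)$. The simplicial enrichment $\MapsSet(\op{A},\op{B}) = \SingTop(\MapTop(B,A))$ extends to $\Ind(\bfD)$, and together with the cotensors supplied by \ref{D3} and the tensors $\op{A}\otimes X = \op{A^X}$ it makes $\Ind(\bfD)$ a tensored and cotensored simplicial category. Compactness of the $\op{M_n}$ is exactly what lets the small object argument run against the proposed generating (trivial) cofibrations. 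Moreover the tensors appearing in those cells genuinely lie in $\bfD$: since $D^k \cong I^k$, the cotensor $M_n^{D^k}$ is obtained by iterating \ref{D3}, and $\partial D^k$ is a finite colimit of cubes, so $M_n^{\partial D^k}$ is a finite limit of such cotensors and lies in $\bfD^{op}$ by \ref{D4}; similarly for the generating trivial cofibrations.

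The model structure is then produced by \Cref{appendix:mainthm}, which builds a cofibrantly generated simplicial model structure on $\Ind(\bfD)$ whose fibrations and weak equivalences are detected by the corepresentable functors $\MapsSet(\op{M_n},-)$; this is item (i). Item (iv) follows because these functors preserve filtered colimits (the $\op{M_n}$ being compact) while weak equivalences of simplicial sets are themselves closed under filtered colimits. Item (ii) is immediate from the shape of the enrichment: for every $C$ the simplicial set $\MapsSet(\op{M_n},C)$ is a filtered colimit of singular complexes of spaces and hence a Kan complex, so $\MapsSet(\op{M_n},C)\to *$ is a Kan fibration and $C$ is fibrant. Finally, the main theorem delivers generating (trivial) cofibrations in simplicial form, built from $\op{M_n}\otimes\partial\Delta^k \to \op{M_n}\otimes\Delta^k$ and the corresponding horn-type inclusions; since $\lvert\Delta^k\rvert\cong D^k$ and $\lvert\partial\Delta^k\rvert\cong\partial D^k$, \Cref{appendix:remarkTopologicalGenCofibs} uses the genuinely topological (not merely simplicial) enrichment to replace these by the topological cells of item (iii).

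The step I expect to be the main obstacle — were one to prove the theorem directly rather than cite the appendix — is the acyclicity underlying the small object argument: that relative cell complexes of the generating trivial cofibrations are weak equivalences, and that a map which is simultaneously a fibration and a weak equivalence is precisely one with the right lifting property against the generating cofibrations. This is exactly what the notion of a cofibration test category abstracts, and it is where \ref{D2}--\ref{D4} do the real work, providing the matrix algebras as test objects and the interval cotensor $(\mhyphen)^I$ as a source of cylinder and path objects. A secondary subtlety is the passage from simplicial to topological generating cofibrations, which relies on the compatibility of the $\SingTop$-enrichment with the cotensors by $D^k$ and $I$; this is ensured by the fact, noted before the statement, that $(\mhyphen)^I$ preserves finite limits.
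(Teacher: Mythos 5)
Your proposal is correct and follows exactly the route the paper takes: verify that $(\bfD,\MAlgu)$ is a minimal cofibration test category via \Cref{appendix:exampleTopological} (using \ref{D1}--\ref{D4} and the fact that $(\mhyphen)^I$ preserves finite limits), then read off all four items from \Cref{appendix:mainthm} together with \Cref{appendix:remarkTopologicalGenCofibs}. Your added verifications (compactness of the $\op{M_n}$, existence of the relevant tensors in $\bfD$, Kan-ness of the mapping complexes) are accurate and consistent with the appendix's arguments.
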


We will call this model category the \emph{model category of (pointed) non-commutative CW-complexes}.

\begin{remark}
	Note that the $C^*$-algebra $\{0\}$ defines both an initial and a terminal object of $\bfD$, and hence also of $\Ind(\bfD)$. In particular, the simplicial enrichment of $\Ind(\bfD)$ can be upgraded to an enrichment in the category of pointed simplicial sets, where the basepoint of $\MapsSetp(C,D)$ is the unique map $C \to D$ that factors through $\{0\}$. This makes $\Ind(\bfD)$ into a $\s\Set_*$-enriched model category. 
\end{remark}

\begin{remark}
The category $\Ind(\bfD)$ clearly depends on the choice of subcategory $\bfD^{\mathrm{op}}$ of $\CAlg$. However, for the model structure this is not really the case; at least, not up to Quillen equivalence. To see this, first note that we may assume without loss of generality that $\bfD^{\mathrm{op}}$ is an isomorphism-closed full subcategory. By \Cref{remark:MinimalD}, we have an inclusion $\bfD^{\mathrm{op}}_\mathrm{min} \hookrightarrow \bfD^{\mathrm{op}}$. A proof similar to that of Proposition 7.8 of \cite{BlomMoerdijk2020SimplicialProV1} then shows that this inclusion induces a Quillen equivalence $\Ind(\bfD_\mathrm{min}) \rightleftarrows \Ind(\bfD)$ when both categories are endowed with the model structure of \Cref{theorem:TheModelStructureUnstable}.
\end{remark}

\subsection{Finite cell complexes}

Recall the definition of a (finite) cell complex in a cofibrantly generated model category from \cite[Def.\ 10.5.8]{Hirschhorn2003Model}. In the model category $\Ind(\bfD)$ from \Cref{theorem:TheModelStructureUnstable}, an object $C$ is a finite cell complex if the map $\varnothing \to C$ is obtained by attaching cells of the form $\op{M_n} \otimes \partial D^k \cofarrow \op{M_n} \otimes D^k$ a finite number of times; these are precisely the finite $\MAlgu$-cell complexes in the terminology of \Cref{appendix:def:FiniteCellComplex}. In particular, they can be viewed as objects in $\bfD$. It is easy to see that these are exactly the objects called \emph{finite pointed noncommutative CW-complexes} in \cite[Def.\ 2.3]{AroneBarneaSchlank2021SpectralPresheavesV2}. Denote the full simplicial subcategory that they span by $\NCWf$.

\begin{remark}
	Since we do not assume that cells may only be attached to lower-dimensional cells, it would perhaps be better to call these objects noncommutative cell complexes and reserve the name noncommutative CW-complex for objects where this extra assumption is made.
\end{remark}

The $\infty$-category $\mathtt{NCW}$ of noncommutative pointed CW-complexes is defined in \cite[\S 2]{AroneBarneaSchlank2021SpectralPresheavesV2} as $\Indinfty(\hcN(\NCWf))$, where $\hcN$ denotes the homotopy coherent nerve and $\Indinfty$ the $\infty$-categorical ind-completion in the sense of \cite[Def.\ 5.3.5.1]{Lurie2009HTT}. In particular, the following is a direct consequence of \Cref{appendix:propUnderlyingInftyCat}.

\begin{proposition}\label{prop:comparison-non-monoidal}
The underlying $\infty$-category of the model category $\Ind(\bfD)$ from \Cref{theorem:TheModelStructureUnstable} is equivalent to the $\infty$-category of noncommutative pointed CW-complexes defined in \cite{AroneBarneaSchlank2021SpectralPresheavesV2}.
\end{proposition}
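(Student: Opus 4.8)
The plan is to reduce the statement to the appendix result \Cref{appendix:propUnderlyingInftyCat}, which identifies the underlying $\infty$-category of a model structure produced by the cofibration test category machinery with the $\infty$-categorical ind-completion of the homotopy coherent nerve of its finite cell complexes. First I would recall that, for a simplicial model category, the underlying $\infty$-category is computed as $\hcN$ of the full simplicial subcategory spanned by the fibrant-cofibrant objects. Since every object of $\Ind(\bfD)$ is fibrant by item (ii) of \Cref{theorem:TheModelStructureUnstable}, cofibrancy is the only constraint, and a cofibrant object is precisely a retract of a cell complex built from the generating cofibrations of item (iii).

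The key input is that the finite cell complexes $\NCWf$ serve as compact generators: by item (iii) the generating cofibrations have domains and codomains that are finite $\MAlgu$-cell complexes in the sense of \Cref{appendix:def:FiniteCellComplex}, and by item (iv) the weak equivalences are stable under filtered colimits. Under exactly these hypotheses, \Cref{appendix:propUnderlyingInftyCat} yields an equivalence between the underlying $\infty$-category of $\Ind(\bfD)$ and $\Indinfty(\hcN(\NCWf))$, where $\NCWf$ carries the simplicial enrichment $\MapsSet(\op{A},\op{B}) = \SingTop(\MapTop(B,A))$ inherited from $\Ind(\bfD)$.

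It then remains only to match this with the definition of \cite{AroneBarneaSchlank2021SpectralPresheavesV2}, whose $\infty$-category of noncommutative pointed CW-complexes is $\Indinfty(\hcN(\NCWf))$ by construction in \cite[\S 2]{AroneBarneaSchlank2021SpectralPresheavesV2}, once one identifies our $\NCWf$ with their finite pointed noncommutative CW-complexes of \cite[Def.~2.3]{AroneBarneaSchlank2021SpectralPresheavesV2}. This identification was already observed in the discussion preceding the statement. The one point requiring care is that \cite{AroneBarneaSchlank2021SpectralPresheavesV2} use a topological enrichment on $\NCWf$ whereas we use a simplicial one; since $\hcN$ of a topological category agrees up to equivalence with $\hcN$ of the simplicial category obtained by applying $\SingTop$ to its mapping spaces, and our simplicial hom-objects are defined precisely by $\SingTop(\MapTop(-,-))$, the two nerves agree. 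As $\Indinfty$ preserves equivalences of $\infty$-categories, the resulting ind-completions are equivalent, giving the claim.

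The main obstacle I anticipate is the bookkeeping in this last compatibility step: verifying that the mapping spaces of $\NCWf$ computed inside the underlying $\infty$-category of $\Ind(\bfD)$ coincide, up to weak equivalence, with the topological mapping spaces used in \cite{AroneBarneaSchlank2021SpectralPresheavesV2}, so that no implicit (co)fibrant replacement alters the homotopy type. Since all objects are fibrant and the finite cell complexes are cofibrant, the simplicial hom-spaces $\MapsSet(\op{A},\op{B})$ already model the correct derived mapping spaces, so this reduces to the enrichment-translation remark above and should be routine once stated carefully.
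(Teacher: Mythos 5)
Your proposal is correct and follows essentially the same route as the paper: the paper's proof consists precisely of invoking \Cref{appendix:propUnderlyingInftyCat} together with the identification (made in the preceding discussion) of the finite $\MAlgu$-cell complexes with the finite pointed noncommutative CW-complexes of \cite[Def.~2.3]{AroneBarneaSchlank2021SpectralPresheavesV2}. The extra care you take about the topological versus simplicial enrichment of $\NCWf$ is a reasonable point that the paper handles implicitly (it defines $\NCWf$ as a \emph{simplicial} subcategory with hom-spaces $\SingTop(\MapTop(-,-))$ and notes that $\hcN$ applies to either kind of enrichment), so it does not constitute a different argument.
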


\subsection{Tensor products}\label{ssec:Tensorproducts}

Both the maximal and the minimal tensor product endow $\CAlg$ with a symmetric monoidal structure. It is natural to ask whether these tensor products extend to tensor products on $\Ind(\bfD)$ and how these interact with the model structure from \Cref{theorem:TheModelStructureUnstable}. The first of these questions is easy to answer: if the (maximal or minimal) tensor product $\otimes$ restricts to a symmetric monoidal structure on $\bfD^{\mathrm{op}}$ (and hence on $\bfD$), then the canonical extension
\begin{equation}\label{eq:MonoidalStructureIndD}
	C \otimes D :=  \{c_i \otimes d_j \}_{(i,j) \in I \times J} \cong \colim_{(i,j) \in I \times J} c_i \otimes d_j,
\end{equation}
defines a symmetric monoidal structure on $\Ind(\bfD)$, where $C = \{c_i\}_{i \in I}$ and $D = \{d_j\}_{j \in J}$ are arbitrary objects of $\Ind(\bfD)$. Furthermore, if the tensor product on $\bfD$ preserves finite colimits in both variables, then its extension to $\Ind(\bfD)$ admits a right adjoint in both variables, meaning that $\Ind(\bfD)$ is closed monoidal. This follows since the filtered colimit preserving extension of a finite colimit preserving functor always admits a right adjoint (cf. \cite[\S 2.2]{BlomMoerdijk2020SimplicialProV1}).

Let us consider the cases where $\bfD^{\mathrm{op}}$ is the category of separable or the category of nuclear separable $C^*$-algebras. In both cases, $\bfD^{\mathrm{op}}$ is closed under the minimal as well as the maximal tensor product; hence, they extend to symmetric monoidal structures on $\Ind(\bfD)$. However, in the case where $\bfD^{\mathrm{op}}$ is the full subcategory of all separable $C^*$-algebras, neither of these tensor products preserves pullbacks, so they do not make $\Ind(\bfD)$ into a closed symmetric monoidal category.\footnote{It is claimed in \cite[Rem.\ 3.10]{Pedersen1999PullbackPushout} that the maximal tensor product preserves pullbacks in both of its variables. However, since being a monomorphism can be expressed through a pullback diagram, this would imply that the maximal tensor product preserves monomorphisms. This is wrong for (separable) $C^*$-algebras in general, as also mentioned in \cite[Rem.\ 3.10]{Pedersen1999PullbackPushout}.} However, in the case of nuclear $C^*$-algebras, the situation is much better: these are by definition the $C^*$-algebras for which the minimal and maximal tensor products agree, and by \cite[Thm.\ 3.9]{Pedersen1999PullbackPushout} the tensor product of nuclear $C^*$-algebras preserves pullbacks (and hence finite limits) in each of its variables. In particular, taking $\bfD^{\mathrm{op}}$ to be the category of nuclear separable $C^*$-algebras, we obtain a closed symmetric monoidal structure on $\Ind(\bfD)$. Moreover, this tensor product turns out to interact well with the model structure.

\begin{proposition}
	If $\bfD^{\mathrm{op}}$ is the category of nuclear separable $C^*$-algebras, then $\Ind(\bfD)$ is a symmetric monoidal model category when equipped with the monoidal structure given in \eqref{eq:MonoidalStructureIndD}.
\end{proposition}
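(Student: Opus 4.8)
The plan is to verify the pushout-product axiom and the unit axiom that together define a symmetric monoidal model category (in the sense of, e.g., \cite[Def.~4.2.6]{Hovey1999Model}). Since the model structure of \Cref{theorem:TheModelStructureUnstable} is cofibrantly generated, the pushout-product axiom reduces, by a standard argument, to checking it on generating (trivial) cofibrations: it suffices to show that the pushout-product $f \mathbin{\square} g$ of two generating cofibrations is a cofibration, and that it is furthermore a trivial cofibration whenever one of $f$, $g$ is a generating trivial cofibration. Recalling from \Cref{theorem:TheModelStructureUnstable}\ref{theorem:TheModelStructureUnstable:item3} that the generating cofibrations have the form $\op{M_n} \otimes \partial D^k \to \op{M_n} \otimes D^k$ and the generating trivial cofibrations the form $\op{M_n} \otimes (D^k \times \{0\}) \to \op{M_n} \otimes (D^k \times I)$, the computation of these pushout-products should be controlled entirely by the interaction of the tensor product on $\Ind(\bfD)$ with cotensors by compact Hausdorff spaces.

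The first key step is to make the identification $\op{M_n} \otimes X = \op{M_n^X}$ precise and to compute the monoidal product of two generators. Since $M_m \otimes M_n \cong M_{mn}$ for the (unique, as these are nuclear) tensor product of matrix algebras, and since the tensor product on $\Ind(\bfD)$ preserves the relevant colimits by construction \eqref{eq:MonoidalStructureIndD}, I expect a natural isomorphism of the form
\[
(\op{M_m} \otimes Y) \otimes (\op{M_n} \otimes Z) \;\cong\; \op{M_{mn}} \otimes (Y \wedge Z)
\]
for pointed compact Hausdorff spaces $Y, Z$, where $\wedge$ is the smash product. Granting this, the pushout-product of $\op{M_m} \otimes \partial D^j \to \op{M_m} \otimes D^j$ with $\op{M_n} \otimes \partial D^k \to \op{M_n} \otimes D^k$ becomes $\op{M_{mn}}$ tensored with the pushout-product of the topological (co)cofibrations $\partial D^j \to D^j$ and $\partial D^k \to D^k$. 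The latter is, up to the standard homeomorphism $D^j \times D^k \cong D^{j+k}$ identifying boundaries, again a generating cofibration of the form $\op{M_{mn}} \otimes \partial D^{j+k} \to \op{M_{mn}} \otimes D^{j+k}$ (or a retract/relative cell complex built from such), so it is a cofibration. The trivial-cofibration case is handled identically, using that $(D^k \times \{0\} \to D^k \times I)$ smashed against a sphere inclusion yields an inclusion of the form of a generating trivial cofibration.

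The main obstacle I anticipate is twofold. First, one must be careful that the tensor product, being defined via cotensors on the $C^*$-algebra side (hence contravariantly), correctly produces smash products rather than unreduced products; the pointedness bookkeeping, i.e.\ that $\op{M_n} \otimes X$ depends on $X$ through its reduced structure with $\{0\}$ as the zero object, is where the identification $M_n^{X \times Y} \cong (M_n^X)^Y$ and the reduced-versus-unreduced distinction must be tracked carefully. Second, and more seriously, the generating trivial cofibrations are \emph{not} of the form ``sphere inclusion smashed with matrix algebra'' but involve the homotopy $D^k \times I$, so verifying that $f \mathbin{\square} g$ lands in the trivial cofibrations requires knowing that these pushout-products are weak equivalences; here I would lean on \Cref{theorem:TheModelStructureUnstable}\ref{theorem:TheModelStructureUnstable:item1}, that weak equivalences are detected by $\MapsSet(\op{M_n}, -)$, together with the enriched adjunction relating $\otimes$ to the internal hom, to reduce the claim to a statement about simplicial mapping spaces that follows from the topological homotopy equivalence $D^k \times I \simeq D^k \times \{0\}$. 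Finally, the unit axiom is essentially automatic here since the monoidal unit $\op{M_1} = \op{\bbC}$ is already cofibrant, so tensoring with it is a left Quillen functor and no cofibrant-replacement correction is needed.
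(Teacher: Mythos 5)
Your proposal follows essentially the same route as the paper: reduce the pushout-product axiom to generating (trivial) cofibrations, use $M_m \otimes M_n \cong M_{mn}$ together with the compatibility of the tensor product with cotensors by compact Hausdorff spaces to identify the pushout-products as $\op{M_{mn}}$ tensored with the topological pushout-products of $(\partial D^j \to D^j)$ and $(D^k\times\{0\} \to D^k\times I)$, and observe these are again (trivial) cofibrations; the unit axiom is indeed automatic since $\op{M_1}$ is cofibrant. The only cosmetic difference is that the paper works with \emph{unpointed} Cartesian products ($A^Y \otimes B^Z \cong (A\otimes B)^{Y\times Z}$) rather than smash products of pointed spaces, which sidesteps the reduced-versus-unreduced bookkeeping you flag.
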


\begin{proof}
	It follows from the above that the tensor product on $\bfD$ preserves finite colimits in each of its variables separately and hence that it extends to a closed symmetric monoidal structure on $\Ind(\bfD)$. To see that it is a monoidal model category, note that the pushout product of $\op{M_n} \otimes \partial D^k \to \op{M_n} \otimes D^k$ and $\op{M_{n'}} \otimes \partial D^{k'} \to \op{M_{n'}} \otimes D^{k'}$ is
	\[\op{M_{n \times n'}} \otimes \left(D^k \times \partial D^{k'} \smashoperator{\bigcup_{\partial D^k \times \partial D^{k'}}} \partial D^k \times D^{k'}\right) \to \op{M_{n \times n'}} \otimes (D^k \times D^{k'})\]
	and that the pushout product of $\op{M_n} \otimes \partial D^k \to \op{M_n} \otimes D^k$ and $\op{M_{n'}} \otimes ( D^{k'} \times \{0\}) \to \op{M_{n'}} \otimes ( D^{k'} \times I)$ is
	\[\op{M_{n \times n'}} \otimes \left(D^k \times D^{k'} \times \{0\} \smashoperator{\bigcup_{\partial D^k \times D^{k'} \times \{0\}}} \partial D^k \times D^{k'} \times I \right) \to \op{M_{n \times n'}} \otimes (D^k \times D^{k'} \times I).\]
	Here we use that $M_n \otimes M_{n'} = M_{n \times n'}$ and that the maximal tensor product of $C^*$-algebras is compatible with cotensors by compact Hausdorff spaces (cf. \cite[Lem.\ 2.2]{Uuye2013Homotopical}). It is clear that these maps are cofibrations and trivial cofibrations in $\Ind(\bfD)$, respectively, so by \cite[Cor.\ 4.2.5]{Hovey1999ModelCats} we conclude that $\Ind(\bfD)$ is a symmetric monoidal model category.
\end{proof}

Since $\Ind(\bfD)$ is a symmetric monoidal model category, its underlying $\infty$-category obtains a closed symmetric monoidal structure by \cite[Prop.\ 4.1.7.10]{Lurie2017HA}.
The $\infty$-category of noncommutative pointed CW-complexes $\mathtt{NCW}$ defined in \cite{AroneBarneaSchlank2021SpectralPresheavesV2} also has a symmetric monoidal structure.
Under the equivalence of \Cref{prop:comparison-non-monoidal}, these agree:

\begin{proposition}\label{prop:comparison-monoidal}
    Let $\bfD^{\mathrm{op}}$ be the category of nuclear separable $C^*$-algebras. Then the equivalence of $\infty$-categories from \Cref{prop:comparison-non-monoidal} can be upgraded to a strong symmetric monoidal equivalence.
\end{proposition}

\begin{proof}
    The symmetric monoidal structure on $\mathtt{NCW} = \Indinfty(\hcN(\NCWf))$ defined in \cite[\S 2]{AroneBarneaSchlank2021SpectralPresheavesV2} is uniquely determined by the facts that it preserves filtered colimits in each variable and that the inclusion $\hcN(\NCWf) \hookrightarrow \Indinfty(\hcN(\NCWf))$ is strong symmetric monoidal.
    The underlying symmetric monoidal $\infty$-category of $\Ind(\bfD)$, constructed as in \cite[Prop.\ 4.1.7.10]{Lurie2017HA}, also has these properties by construction.
\end{proof}

\section{Noncommutative CW-spectra}

In this section, we will study the stabilization of the model category of noncommutative CW-complexes. In order to obtain a stable model category that is enriched in some category of spectra, we will work with a stabilization based on Lydakis' stable model category of simplicial functors. We prove that this stabilization is equivalent to the category of spectral presheaves on a certain category $\scalM$, and then show that this model category is Quillen equivalent to the category of spectral presheaves on the category $\mathcal{M}_s$ defined in \cite{AroneBarneaSchlank2021SpectralPresheavesV2}. In particular, this recovers one of the main results of that paper.

\subsection{Stabilizing the category of noncommutative CW-complexes}

Write $\Sp(\Ind(\bfD))$ for the category of (sequential) spectrum objects in $\Ind(\bfD)$ as defined in \cite[Def.\ 2.1.1]{Schwede1997SpectraCotangent}. By item \ref{theorem:TheModelStructureUnstable:item2} of \Cref{theorem:TheModelStructureUnstable}, the model structure for noncommutative CW-complexes is right proper; hence, the stable model structure on $\Sp(\Ind(\bfD))$ exists by \cite[Thm.\ 5.23]{BiedermannRondigs2014CalculusII}.

\begin{proposition}\label{prop:comparison-stable-non-monoidal}
	The underlying $\infty$-category of the stable model structure on $\Sp(\Ind(\bfD))$ is the stabilization of the underlying $\infty$-category of $\Ind(\bfD)$. In particular, $\Sp(\Ind(\bfD))$ models the $\infty$-category of noncommutative CW-spectra from \cite[\S 6]{AroneBarneaSchlank2021SpectralPresheavesV2}.
\end{proposition}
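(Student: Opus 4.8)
The plan is to identify both the underlying $\infty$-category of $\Sp(\Ind(\bfD))$ and the stabilization of the underlying $\infty$-category of $\Ind(\bfD)$ with the localization of a common $\infty$-category of prespectra. Write $\mathcal{C}$ for the underlying $\infty$-category of $\Ind(\bfD)$; by the previous proposition this is the $\infty$-category of noncommutative pointed CW-complexes, and since $\Ind(\bfD)$ is cofibrantly generated (and in fact combinatorial), $\mathcal{C}$ is pointed and presentable, so its stabilization in the sense of \cite[\S6]{AroneBarneaSchlank2021SpectralPresheavesV2} is defined.

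First I would analyse the levelwise (strict) model structure on $\Sp(\Ind(\bfD))$, whose weak equivalences and fibrations are defined levelwise. An object of $\Sp(\Ind(\bfD))$ is a sequence $(X_n)_{n \geq 0}$ together with structure maps $\Sigma X_n \to X_{n+1}$, where $\Sigma = S^1 \wedge (\mhyphen)$ is a left Quillen endofunctor whose underlying functor is the $\infty$-categorical suspension on $\mathcal{C}$. I would use this, together with the facts that all objects are fibrant (item \ref{theorem:TheModelStructureUnstable:item2}) and that the underlying $\infty$-category of a sequential diagram-type model structure is computed levelwise with derived structure maps, to identify the underlying $\infty$-category of the levelwise structure with the $\infty$-category of prespectra in $\mathcal{C}$, i.e. sequences $(X_n)$ equipped with maps $\Sigma X_n \to X_{n+1}$.

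Next I would recall that the stable model structure of \cite{BiedermannRondigs2014CalculusII} is obtained from the levelwise one by inverting the stable equivalences, whose fibrant objects are exactly the levelwise-fibrant $\Omega$-spectra. Passing to underlying $\infty$-categories should carry this Bousfield-type localization to the $\infty$-categorical localization of prespectra at the maps exhibiting $\Omega$-spectra, with local objects precisely the $\Omega$-spectra in $\mathcal{C}$. By the standard description of the stabilization of a pointed presentable $\infty$-category as the limit of the tower $\cdots \xrightarrow{\Omega} \mathcal{C} \xrightarrow{\Omega} \mathcal{C}$, equivalently as the full subcategory of $\Omega$-spectrum objects among prespectra, this localization is exactly $\Sp(\mathcal{C})$. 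Combining this with the identification $\mathcal{C} \simeq \Indinfty(\hcN(\NCWf))$ and the definition of noncommutative CW-spectra in \cite[\S6]{AroneBarneaSchlank2021SpectralPresheavesV2} as the stabilization of that $\infty$-category yields the ``in particular'' statement.

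I expect the main obstacle to be the third step: matching the model-categorical stable equivalences of \cite{Schwede1997SpectraCotangent} and \cite{BiedermannRondigs2014CalculusII} with the $\infty$-categorical $\Omega$-spectrum localization, and justifying that forming underlying $\infty$-categories sends the stable localization to the localization of the underlying $\infty$-categories. The cleanest formulation of this compatibility is usually stated under left properness, whereas here only right properness is available (a consequence of item \ref{theorem:TheModelStructureUnstable:item2}), together with filtered-colimit-stable weak equivalences (item \ref{theorem:TheModelStructureUnstable:item4}). I would therefore either establish the localization statement directly from the universal property of the underlying-$\infty$-category functor, verifying that $\Sp(\Ind(\bfD))$ is a stable model category and that $\Sigma^\infty \colon \Ind(\bfD) \to \Sp(\Ind(\bfD))$ induces the universal functor to a stable presentable $\infty$-category, or reduce to a Quillen-equivalent model in which the standard comparison applies.
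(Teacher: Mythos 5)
Your proposal is correct and follows essentially the same route as the paper, whose entire proof is the citation ``this is proved analogously to \cite[Prop.~4.2.4]{Robalo2014Motivic}'': Robalo's argument proceeds exactly by identifying the underlying $\infty$-category of the levelwise model structure with prespectra in $\mathcal{C}$ and then matching the stable Bousfield-type localization with the $\infty$-categorical localization onto $\Omega$-spectra. Your closing remark about the properness hypotheses needed to compare the model-categorical and $\infty$-categorical localizations is a legitimate point that the paper leaves implicit in the word ``analogously,'' so spelling it out as you propose is the right place to focus any additional effort.
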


\begin{proof}
	This is proved analogously to \cite[Prop.\ 4.2.4]{Robalo2014Motivic}.
\end{proof}

Unfortunately, this model category does not come with a spectral enrichment, so Theorem 3.9.3.(iii) of \cite{SchwedeShipley2003ModelCatsAreModuleCats} or the generalization of that theorem given in \cite[Thm.\ 1.36]{MayGuillou2020EnrichedPresheaf} cannot be applied directly. To solve this, we will work with Lydakis' model structure for linear functors.

Recall that a simplicial set is called \emph{finite} if it has finitely many nondegenerate simplices. For any pointed simplicial category $\mathcal E$, let $\SF(\mathcal E)$ denote the category of pointed simplicial functors $\pfinsSet \to \mathcal E$, where $\pfinsSet$ is the category of pointed finite simplicial sets. If $\mathcal{E}$ is equipped with a model structure, then we will call such a pointed simplicial functor a \emph{homotopy functor} if it preserves weak (homotopy) equivalences and \emph{linear} if it furthermore sends homotopy pushouts to homotopy pullbacks. In \cite{Lydakis1998SimplicialFunctors}, Lydakis showed that there exists a left Bousfield localization of the projective model structure on $\sSF := \SF(\s\Set_*)$ in which the fibrant objects are exactly the pointwise fibrant linear homotopy functors (called the \emph{stable model structure of simplicial functors} there). It is shown that this is a symmetric monoidal model structure under the Day convolution product and that it is Quillen equivalent to the model category of sequential spectra \cite[Thm.\ 11.3]{Lydakis1998SimplicialFunctors}. Let us denote this model category by $\sSFlin$.

In \cite{BiedermannRondigs2014CalculusII}, this construction is extended to a more general setting that also applies to $\Ind(\bfD)$. In particular, it follows from Theorem 5.8 of that paper applied to the case $n=1$ that the analogous left Bousfield localization for $\SF(\Ind(\bfD))$ exists, which we will denote by $\SFlin(\Ind(\bfD))$. Theorems 5.24 and 5.27 of \cite{BiedermannRondigs2014CalculusII} then imply that $\SFlin(\Ind(\bfD))$ is equivalent to the stable model structure on $\Sp(\Ind(\bfD))$. Unlike $\Sp(\Ind(\bfD))$, it can be shown that this model structure \emph{does} come with a canonical spectral enrichment.

\begin{proposition}\label{prop:SFlinEnrichment}
	$\SFlin(\Ind(\bfD))$ is an $\sSFlin$-enriched model structure.
\end{proposition}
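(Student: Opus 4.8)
The plan is to show that the category $\SF(\Ind(\bfD))$ of pointed simplicial functors $\pfinsSet \to \Ind(\bfD)$ carries a natural tensoring over $\sSF = \SF(\s\Set_*)$ via Day convolution, that this tensoring is compatible with the projective model structures, and that it descends to the two left Bousfield localizations $\SFlin$ and $\SFlin(\Ind(\bfD))$.

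First I would construct the enrichment at the pointwise level. Since $\Ind(\bfD)$ is tensored and cotensored over $\s\Set_*$ (by the remark following \Cref{theorem:TheModelStructureUnstable}), the category $\SF(\Ind(\bfD))$ of $\s\Set_*$-enriched functors out of $\pfinsSet$ inherits a Day convolution module structure over $\sSF = \SF(\s\Set_*)$: given $F \in \sSF$ and $G \in \SF(\Ind(\bfD))$, one sets $(F \boxtimes G)(K) = \colim_{L \wedge L' \to K} F(L) \wedge G(L')$, where the smash $\wedge$ on the right is the tensoring of $\Ind(\bfD)$ over $\s\Set_*$. The associativity and unit coherences, and the existence of the internal hom giving a closed module structure, are formal consequences of the corresponding properties of Day convolution (cf.\ \cite{Lydakis1998SimplicialFunctors}) together with the fact that $\Ind(\bfD)$ is closed over $\s\Set_*$. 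I would then cite or sketch the Day-convolution formalism rather than reprove it.

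Next I would verify the pushout-product (SM7) axiom at the level of the \emph{projective} model structures, namely that for a cofibration $f$ in $\sSF$ and a cofibration $g$ in $\SF(\Ind(\bfD))$, the pushout product $f \,\square\, g$ is a cofibration in $\SF(\Ind(\bfD))$, trivial if either $f$ or $g$ is. It suffices to check this on generating cofibrations. The generating (trivial) cofibrations of the projective structures have the form $(\pfinsSet)(S,-) \wedge (A \to B)$ where $A \to B$ runs over generating (trivial) cofibrations of $\s\Set_*$ and $\Ind(\bfD)$ respectively and $S$ ranges over objects of $\pfinsSet$. Because Day convolution sends a pair of such representable-tensored generators to a representable-tensored cell built from $S \wedge S'$ and the pushout product $(A \to B)\,\square\,(A' \to B')$, the statement reduces to the pushout-product axiom for the tensoring of $\Ind(\bfD)$ over $\s\Set_*$, which holds because $\Ind(\bfD)$ is a simplicial (indeed $\s\Set_*$-enriched) model category by \Cref{theorem:TheModelStructureUnstable}. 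This gives that $\SF(\Ind(\bfD))$ is an $\sSF$-enriched (projective) model category.

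Finally I would promote this to the localized structures. The content is that the $\sSF$-module structure is compatible with the left Bousfield localizations defining $\sSFlin$ and $\SFlin(\Ind(\bfD))$: one must check that tensoring with a cofibrant object of $\sSFlin$ preserves the stable (i.e.\ $\SFlin$-local) trivial cofibrations of $\SF(\Ind(\bfD))$, equivalently that $- \boxtimes c$ for $c$ cofibrant is a left Quillen functor for the localized structures. I expect this compatibility of the enrichment with the Bousfield localizations to be the main obstacle, since localized model structures have no explicit generating trivial cofibrations and one cannot simply check the pushout-product axiom on generators. The cleanest route is to invoke the general machinery governing Day convolution and Bousfield localization in \cite{BiedermannRondigs2014CalculusII}: the localization producing $\SFlin$ and $\SFlin(\Ind(\bfD))$ is, in both cases, the linearization (stabilization at $n=1$), and the monoidal/module smash product is compatible with this linearization because smashing with a representable $\boxtimes$-factor commutes with the assembly maps that detect linearity. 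Thus the $\sSF$-enrichment descends to an $\sSFlin$-enrichment on $\SFlin(\Ind(\bfD))$, as claimed.
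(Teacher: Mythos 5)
Your first two steps track the paper's proof closely: the paper also defines the tensor by a Day-convolution coend, defines the cotensor and enrichment by $(X^F)(K) = (X \circ K_!)^F$ and $\MapSF(X,Y)(K) = \MapsSetp(X, Y\circ K_!)$, and verifies the pushout-product axiom for the \emph{projective} structures on generators, exactly as you propose (reducing to the $\s\Set_*$-enrichment of $\Ind(\bfD)$, following Lydakis). The problem is your third step. You correctly identify the compatibility with the two Bousfield localizations as the main obstacle, but you then resolve it by appeal to ``general machinery'' in \cite{BiedermannRondigs2014CalculusII} plus the one-line assertion that smashing with a representable factor commutes with the assembly maps detecting linearity. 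That reference establishes the existence of the localized model structures (and monoidality in the homogeneous case), but it does not hand you the mixed statement you need here, namely the two-variable pushout-product condition for an $\sSFlin$-\emph{module} structure on $\SFlin(\Ind(\bfD))$, where the trivial cofibration may live in either the base or the module. As you yourself note, localized trivial cofibrations have no explicit generators, so this step genuinely requires an argument, and your proposal does not contain one.

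The paper supplies that argument, and it is worth knowing its shape. Given a generating cofibration $X \cofarrow Y$ in $\SFlin(\Ind(\bfD))$ and a trivial cofibration $F \cofarrow G$ in $\sSFlin$, one already knows the pushout product is a cofibration, so it suffices to check the left lifting property against fibrations between fibrant objects $B \fibarrow A$ (cf.\ \cite[Lem.~8.43]{HeutsMoerdijk2020Trees}). By adjunction this becomes the claim that $\Map(Y,B) \to \Map(Y,A) \times_{\Map(X,A)} \Map(X,B)$ is a fibration in $\sSFlin$; since it is a projective fibration, by \cite[Prop.~3.3.16]{Hirschhorn2003Model} one only needs its domain and codomain to be $\sSFlin$-fibrant. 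Everything therefore reduces to the key lemma: for $Z$ cofibrant and $L$ fibrant in $\SFlin(\Ind(\bfD))$, the hom-object $\MapSF(Z,L)$ is a projectively fibrant linear homotopy functor. Linearity is where your ``commutes with assembly maps'' intuition gets made precise, via the isomorphism $\Omega \circ \MapSF(Z,L) \circ \Sigma \cong \MapSF(Z, \Omega L \Sigma)$ and the fact that $L \to \Omega L \Sigma$ is a pointwise equivalence of projectively fibrant functors. Without this reduction to fibrant objects and the verification of the key lemma (or an explicit citation that actually covers the module case), your proof of the localized pushout-product axiom is incomplete.
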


\begin{proof}
	The enrichment, tensor, and cotensor over $\sSF$ are defined by formulas analogous to those of the Day convolution: Given a simplicial set $K$, write $K_! \colon \pfinsSet \to \pfinsSet$ for the functor defined by $K_!(M) = M \wedge K$. The tensor is defined as the coend
	\[(X \otimes F)(K) = \qquad \smashoperator{\int\limits^{(K_1,K_2) \in \pfinsSet \times \pfinsSet}} \quad X(K_1) \otimes F(K_2) \otimes \Map_*(K_1 \wedge K_2, K), \]
	while the enrichment and cotensor are defined by
	\[\MapSF(X,Y)(K) = \MapsSetp(X, Y \circ K_!) \quad \text{and} \quad (X^F)(K) = (X \circ K_!)^F. \]
	We leave it to the reader to verify that these indeed constitute an enrichment of $\SF(\Ind(\bfD))$ over $\sSF$ that is both tensored and cotensored.
	
	It can be shown that this makes the projective model structure on $\SF(\Ind(\bfD))$ into an $\sSF$-enriched model category (with respect to the projective model structure on $\sSF$) by studying pushout products of generating (trivial) cofibrations. This is similar to the proof of \cite[Thm.\ 12.3]{Lydakis1998SimplicialFunctors} and is left to the reader. To see that $\SFlin(\Ind(\bfD))$ is furthermore an $\sSFlin$-enriched model category, it thus suffices to show that for any pair of cofibrations $X \cofarrow Y$ in $\SFlin(\Ind(\bfD))$ and $F \cofarrow G$ in $\sSFlin$, one of which is trivial, the pushout product
	\begin{equation}\label{eq:PushoutProductDay}
		Y \otimes F \cup_{X \otimes F} X \otimes G \to Y \otimes G
	\end{equation}
	is a trivial cofibration in $\SFlin(\Ind(\bfD))$. We treat the case where $F \cofarrow G$ is trivial; the other case is similar. Without loss of generality, assume that $X \to Y$ is a generating cofibration, so in particular that $X$ and $Y$ are cofibrant. Since we already know that \eqref{eq:PushoutProductDay} is a cofibration, it suffices to show that this map has the left lifting property with respect to fibrations between fibrant objects $B \fibarrow A$ (cf.\ \cite[Lem.\ 8.43]{HeutsMoerdijk2020Trees}). By adjunction, this is equivalent to proving that
	\[\MapSF(Y,B) \to \MapSF(Y,A) \times_{\MapSF(X,A)} \MapSF(X,B) \]
	is a fibration in $\sSFlin$. We already know that this is a fibration in the projective model structure, so by \cite[Prop.\ 3.3.16]{Hirschhorn2003Model} it suffices to show that its domain and codomain are fibrant in $\sSFlin$. Since $X$ and $Y$ are cofibrant and $A$ and $B$ are fibrant, this follows if we can show that for any cofibrant object $Z$ and fibrant object $L$ in $\SFlin(\Ind(\bfD))$, the hom-object $\MapSF(Z,L)$ is fibrant in $\sSFlin$. It is projectively fibrant since the projective model structure is $\sSF$-enriched. It is a homotopy functor since for any weak homotopy equivalence $K \wearrow M$ in $\pfinsSet$, the natural map $L \circ K_! \to L \circ M_!$ is a pointwise equivalence between pointwise fibrant functors, hence $\MapsSetp(Z, L \circ K_!) \to \MapsSetp(Z, L \circ M_!)$ is a weak equivalence. Finally, to see that $\MapSF(G,L)$ is linear, it suffices to show that
	\[\MapSF(G,L) \to \Omega \circ \MapSF(G,L) \circ \Sigma \]
	is a pointwise weak equivalence. This follows since the right-hand side is isomorphic to $\MapSF(G,\Omega L \Sigma)$ and, since $L$ is linear, the map $L \to \Omega L \Sigma$ is a pointwise equivalence between projectively fibrant functors.
\end{proof}

Recall from \Cref{ssec:Tensorproducts} that if we take $\bfD^{\mathrm{op}}$ to be the category of nuclear separable $C^*$-algebras, then $\Ind(\bfD)$ is a closed symmetric monoidal model category. In particular, the Day convolution product endows $\Fun(\pfinsSet, \Ind(\bfD))$ with a closed symmetric monoidal structure, which turns out to be compatible with the model structure.

\begin{proposition}\label{prop:NuclearSymmetricMonoidalSpectra}
	If $\bfD^{\mathrm{op}}$ is the category of nuclear separable $C^*$-algebras, then $\SFlin(\Ind(\bfD))$ is a symmetric monoidal model category under the Day convolution product.
\end{proposition}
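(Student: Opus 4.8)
The plan is to first upgrade the projective model structure on $\SF(\Ind(\bfD))$ to a symmetric monoidal model category under the Day convolution, and then to show that the left Bousfield localization producing $\SFlin(\Ind(\bfD))$ respects this structure, closely mirroring the strategy of \Cref{prop:SFlinEnrichment}. Since $\bfD^{op}$ is nuclear separable, $\Ind(\bfD)$ is closed symmetric monoidal by the discussion in \Cref{ssec:Tensorproducts}, and the internal Day convolution on $\SF(\Ind(\bfD))$ is given by the coend
\[(F \otimes G)(K) = \int^{K_1,K_2} F(K_1) \otimes G(K_2) \otimes \Map_*(K_1 \wedge K_2, K),\]
where now the inner $\otimes$ is the monoidal product of $\Ind(\bfD)$ and the outer one its simplicial tensoring. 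This is closed, with internal hom $\sHom(F,G)(K) = \int_N \sHom_{\Ind(\bfD)}(F(N), G(N \wedge K))$, the end and inner hom being taken in $\Ind(\bfD)$.

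For each $M \in \pfinsSet$, let $F_M$ denote the left adjoint to evaluation $\ev_M$, so that $F_M(A)(K) = \Map_*(M,K) \otimes A$. The key computation is the natural isomorphism $F_M(A) \otimes F_N(B) \cong F_{M \wedge N}(A \otimes B)$, which follows from co-Yoneda, as the Day convolution sends representables to representables. Consequently, the pushout product of two generating projective cofibrations $F_M(i)$ and $F_N(j)$ is isomorphic to $F_{M \wedge N}(i \mathbin{\square} j)$, where $i \mathbin{\square} j$ is the pushout product in $\Ind(\bfD)$. As $\Ind(\bfD)$ is a symmetric monoidal model category, $i \mathbin{\square} j$ is a cofibration, and a trivial one if either $i$ or $j$ is; since $F_{M \wedge N}$ is left Quillen, the same holds for $F_{M \wedge N}(i \mathbin{\square} j)$, verifying the pushout product axiom for the projective structure. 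The unit of the Day convolution is $F_{S^0}(\op{M_1})$, which is cofibrant because $\op{M_1}$ is the monoidal unit of $\Ind(\bfD)$ and is cofibrant there; hence the unit axiom holds as well.

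It remains to promote this to the localized structure. Since cofibrations are unchanged by localization, and by symmetry of $\otimes$ it suffices to treat the case of a cofibration $f \colon X \cofarrow Y$ and a local trivial cofibration $g \colon W \cofarrow Z'$, with $f$ a generating cofibration without loss of generality. The pushout product is already a cofibration, so by \cite[Lem.~8.43]{HeutsMoerdijk2020Trees} it is enough to show it has the \llp \wrt fibrations $p \colon B \fibarrow A$ between fibrant objects; by the pushout-product adjunction this reduces to showing that the pullback-hom $\sHom(Y,B) \to \sHom(Y,A) \times_{\sHom(X,A)} \sHom(X,B)$ is a fibration in $\SFlin(\Ind(\bfD))$, against which $g$ then lifts. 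This map is a projective fibration by the previous paragraph, so by \cite[Prop.~3.3.16]{Hirschhorn2003Model} it suffices to check that its source and target are fibrant. As in \Cref{prop:SFlinEnrichment}, this reduces to the claim that $\sHom(Z,L)$ is a pointwise fibrant linear homotopy functor whenever $Z$ is cofibrant and $L$ is fibrant. Pointwise fibrancy is automatic, since every object of $\Ind(\bfD)$ is fibrant by \Cref{theorem:TheModelStructureUnstable}; moreover $Z$ is projectively cofibrant, hence pointwise cofibrant, so each $\sHom_{\Ind(\bfD)}(Z(N),-)$ is right Quillen and preserves all weak equivalences, which together with $L$ being a homotopy functor shows $\sHom(Z,L)$ is one too.

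The main step, and the point I expect to require the most care, is linearity of $\sHom(Z,L)$, which I would verify via the criterion that $\sHom(Z,L) \to \Omega \circ \sHom(Z,L) \circ \Sigma$ is a pointwise weak equivalence. Because the cotensor by $S^1$ commutes with ends and with $\sHom_{\Ind(\bfD)}(Z(N),-)$ in the second variable, the right-hand side is naturally isomorphic to $\sHom(Z, \Omega L \Sigma)$, and the comparison map is induced by $L \to \Omega L \Sigma$, which is a pointwise weak equivalence between pointwise fibrant functors precisely because $L$ is linear. The subtlety is that applying $\int_N \sHom_{\Ind(\bfD)}(Z(N),-)$ must preserve this equivalence: this holds because $Z$ is projectively cofibrant, so the end is a homotopy end and sends pointwise weak equivalences between pointwise fibrant functors to weak equivalences. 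This is exactly parallel to the analysis of the enriched mapping object $\MapSF(Z,L)$ in \Cref{prop:SFlinEnrichment}, and I would import that argument to conclude.
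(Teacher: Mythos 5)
Your proposal is correct and follows essentially the same route as the paper, whose proof simply says the argument is almost identical to that of \Cref{prop:SFlinEnrichment}: you verify the pushout-product axiom for the projective Day convolution structure via the computation $F_M(A)\otimes F_N(B)\cong F_{M\wedge N}(A\otimes B)$, then handle the localization by reducing to the fibrancy of the internal hom $\sHom(Z,L)$ in $\SFlin(\Ind(\bfD))$ for $Z$ cofibrant and $L$ fibrant, exactly paralleling the enriched-hom argument there. The extra details you supply (cofibrancy of the unit $F_{S^0}(\op{M_1})$, the homotopy-end justification for linearity) are the right ones and fill in what the paper leaves to the reader.
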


\begin{proof}
	This is almost identical to the proof of \Cref{prop:SFlinEnrichment} and is left to the reader.
\end{proof}

By Proposition 4.1.7.10 of \cite{Lurie2017HA}, we obtain a closed symmetric monoidal structure on the underlying $\infty$-category of $\SFlin(\Ind(\bfD))$.
By Theorems 5.24 and 5.27 of \cite{BiedermannRondigs2014CalculusII} and \cref{prop:comparison-stable-non-monoidal}, the underlying $\infty$-category of $\SFlin(\Ind(\bfD))$ is equivalent to the $\infty$-category $\mathtt{NSp}$ of noncommutative CW-spectra defined in \cite[\S 6]{AroneBarneaSchlank2021SpectralPresheavesV2}, which also comes with a symmetric monoidal structure.

\begin{proposition}\label{prop:comparison-stable-monoidal}
    The equivalence between the underlying $\infty$-category of $\SFlin(\Ind(\bfD))$ and the $\infty$-category $\mathtt{NSp}$ of noncommutative CW-spectra is symmetric monoidal.
\end{proposition}

\begin{proof}
    The symmetric monoidal structure constructed on $\mathtt{NSp}$ in \cite[\S 6]{AroneBarneaSchlank2021SpectralPresheavesV2} is uniquely determined by the fact that it preserves colimits in each variable and that the functor $\Sigma^\infty \colon \mathtt{NCW} \to \mathtt{NSp}$ is strong symmetric monoidal. The underlying symmetric monoidal $\infty$-category of $\SFlin(\Ind(\bfD))$ also satisfies this.
\end{proof}

Let $S_0$ denote the (pointed) simplicial set consisting of two points. The functor $\SFlin(\Ind(\bfD)) \to \Ind(\bfD)$ that evaluates $X \in \SFlin(\Ind(\bfD))$ at $S^0 \in \pfinsSet$ will be denoted $\Omega^\infty$. It has a left adjoint $\Sigma^\infty$ defined by
\[\Sigma^\infty D \colon \pfinsSet \to \Ind(\bfD); \quad K \mapsto D \otimes K. \]
We will call $\Sigma^\infty D$ the \emph{suspension spectrum} of $D$. It is straightforward to see that this is a Quillen pair (with respect to both the projective and the linear model structure).

We conclude this section by identifying the weak equivalences between fibrant objects in $\SFlin(\Ind(\bfD))$.

\begin{proposition}\label{proposition:DetectingWeakEquivalences}
	Let $f \colon X \to Y$ be a map between fibrant objects in $\SFlin(\Ind(\bfD))$. Then $f$ is a weak equivalence if and only if for every $n \geq 1$, the map
	\begin{equation}\label{eq:MapSFDetectingWEs}
		\MapSF(\Sigma^\infty \op{M_n}, X) \to \MapSF(\Sigma^\infty \op{M_n}, Y)
	\end{equation}
	is a weak equivalence in $\sSFlin$.
\end{proposition}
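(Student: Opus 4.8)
The plan is to rewrite both sides of the claimed equivalence as \emph{pointwise} conditions in $\Ind(\bfD)$ and then to invoke the detection property of the matrix algebras recorded in item \ref{theorem:TheModelStructureUnstable:item1} of \Cref{theorem:TheModelStructureUnstable}. The computational heart of the argument is the natural isomorphism
\[
\MapSF(\Sigma^\infty \op{M_n}, X)(K) = \MapsSetp(\Sigma^\infty \op{M_n}, X \circ K_!) \cong \MapsSetp(\op{M_n}, (X \circ K_!)(S^0)) = \MapsSetp(\op{M_n}, X(K)),
\]
valid for every $K \in \pfinsSet$. Here the first equality is the definition of the $\sSF$-enrichment from \Cref{prop:SFlinEnrichment}; the middle isomorphism is the adjunction $\Sigma^\infty \dashv \Omega^\infty = \ev_{S^0}$, which is $\s\Set_*$-enriched because $\Omega^\infty$ is; and the last identification uses $K_!(S^0) = S^0 \wedge K \cong K$, as $S^0$ is the unit for the smash product. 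In particular, evaluating the map \eqref{eq:MapSFDetectingWEs} at $K$ recovers, up to a choice of basepoint, the map $\MapsSet(\op{M_n}, X(K)) \to \MapsSet(\op{M_n}, Y(K))$ from \Cref{theorem:TheModelStructureUnstable}.

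With this in hand I would assemble a chain of equivalent conditions. Since $X$ and $Y$ are fibrant in the left Bousfield localization $\SFlin(\Ind(\bfD))$, the map $f$ is a weak equivalence there if and only if it is one in the projective model structure on $\SF(\Ind(\bfD))$, by the standard characterisation of weak equivalences between fibrant objects of a localization \cite[Thm.~3.2.13]{Hirschhorn2003Model}; equivalently, $X(K) \to Y(K)$ is a weak equivalence in $\Ind(\bfD)$ for every $K$. Applying item \ref{theorem:TheModelStructureUnstable:item1} of \Cref{theorem:TheModelStructureUnstable} levelwise, this is equivalent to $\MapsSet(\op{M_n}, X(K)) \to \MapsSet(\op{M_n}, Y(K))$ being a weak equivalence of simplicial sets for all $n \geq 1$ and all $K$. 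By the displayed isomorphism (and because a map of pointed simplicial sets is a weak equivalence exactly when its underlying map is), this says precisely that \eqref{eq:MapSFDetectingWEs} is a pointwise weak equivalence for every $n$.

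It then remains to promote this pointwise statement to a statement in $\sSFlin$. The objects $\Sigma^\infty \op{M_n}$ are cofibrant, since $\op{M_n}$ is cofibrant in $\Ind(\bfD)$ and $\Sigma^\infty$ is left Quillen; hence both $\MapSF(\Sigma^\infty \op{M_n}, X)$ and $\MapSF(\Sigma^\infty \op{M_n}, Y)$ are fibrant in $\sSFlin$ by the fibrancy statement established in the proof of \Cref{prop:SFlinEnrichment}. For a map between fibrant objects of $\sSFlin$, being a weak equivalence is the same as being a pointwise weak equivalence, again by the localization principle, and combining this with the chain above closes the argument in both directions simultaneously. The one step that demands genuine care is the enriched adjunction isomorphism: one must confirm that $(\Sigma^\infty, \Omega^\infty)$ is an $\s\Set_*$-enriched adjunction and keep the smash-product bookkeeping involving $K_!$ and $S^0$ straight. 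Everything else reduces to standard facts about left Bousfield localizations and the already-established item \ref{theorem:TheModelStructureUnstable:item1} of \Cref{theorem:TheModelStructureUnstable}.
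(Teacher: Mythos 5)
Your proof is correct and follows essentially the same route as the paper's: the same identification $\MapSF(\Sigma^\infty \op{M_n}, X)(K) \cong \MapsSetp(\op{M_n}, X(K))$ via the enrichment formula and the adjunction, the same use of fibrancy to reduce weak equivalences in the localizations to pointwise ones, and the same appeal to item \ref{theorem:TheModelStructureUnstable:item1} of \Cref{theorem:TheModelStructureUnstable}. The only cosmetic difference is that you run both directions through one chain of equivalences, whereas the paper dispatches the ``only if'' direction directly from the $\sSFlin$-enrichment axioms.
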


\begin{proof}
	Note that for every $n \geq 1$, the suspension spectrum $\Sigma^\infty \op{M_n}$ is cofibrant since $\Sigma^\infty$ is left Quillen. The ``only if'' direction follows from the fact that $\SFlin(\Ind(\bfD))$ is $\sSFlin$-enriched. For the other direction, note that the map \eqref{eq:MapSFDetectingWEs} is an equivalence between fibrant objects in $\sSFlin$, hence a pointwise equivalence. Combining this with the definition of $\MapSF$ given in the proof of \Cref{prop:SFlinEnrichment} and the left-adjointness of $\Sigma^\infty$, this shows that
	\[ \MapsSetp(\op{M_n}, X(K)) \to \MapsSetp(\op{M_n}, Y(K))\]
	is a weak equivalence for every $n \geq 1$. By the definition of the weak equivalences in $\Ind(\bfD)$, it follows that $X \to Y$ is a pointwise equivalence, hence an equivalence in $\SFlin(\Ind(\bfD))$.
\end{proof}

\subsection{Noncommutative CW-spectra as spectral presheaves}

We will now identify the model category $\SFlin(\Ind(\bfD))$ of noncommutative CW-spectra with a spectral presheaf category.

\begin{lemma}
	The suspension spectra of matrix algebras $\{\Sigma^\infty \op{M_n}\}_{n \geq 1}$ form a compact generating set in $\SFlin(\Ind(\bfD))$.
\end{lemma}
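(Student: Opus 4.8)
The plan is to verify the two defining properties of a compact generating set separately: that the set detects trivial objects (generation), and that each $\Sigma^\infty \op{M_n}$ is compact, in the sense that $[\Sigma^\infty \op{M_n}, -]$ commutes with arbitrary coproducts in $\Ho(\SFlin(\Ind(\bfD)))$.

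Generation is essentially immediate from \Cref{proposition:DetectingWeakEquivalences}. Since the model structure is stable, the terminal object $*$ coincides with the zero object and is fibrant, while $\MapSF(\Sigma^\infty \op{M_n}, *) \simeq *$. Hence a fibrant $X$ satisfying $\MapSF(\Sigma^\infty \op{M_n}, X) \simeq *$ for all $n \geq 1$ meets the hypothesis of that proposition applied to the map $X \to *$, so $X \to *$ is a weak equivalence and $X$ is weakly trivial. Thus the set detects trivial fibrant objects, which is exactly the generation condition.

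The substance is compactness, and I would first reduce it to a statement in $\Ind(\bfD)$. Since $\op{M_n}$ lies in $\bfD$, it is a compact object of the ind-completion in the categorical sense, so $\MapsSetp(\op{M_n}, -)$ commutes with filtered colimits of ind-objects on the nose; combined with item \ref{theorem:TheModelStructureUnstable:item4} of \Cref{theorem:TheModelStructureUnstable}, which guarantees that filtered colimits preserve weak equivalences and hence compute homotopy colimits, this shows that $\op{M_n}$ is \emph{homotopically} compact in $\Ind(\bfD)$. As homotopically compact objects are closed under finite homotopy colimits, the object $\op{M_n} \otimes K$ is compact for every finite pointed simplicial set $K$; in particular each $\op{M_n} \otimes S^k$ is compact.

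Finally I would propagate compactness through the stabilization by computing in $\Sp(\Ind(\bfD))$ via the Quillen equivalence $\SFlin(\Ind(\bfD)) \simeq \Sp(\Ind(\bfD))$ recalled above, where coproducts are formed levelwise. Because every object of $\Ind(\bfD)$ is fibrant by item \ref{theorem:TheModelStructureUnstable:item2} of \Cref{theorem:TheModelStructureUnstable}, all spectra are automatically levelwise fibrant, so the stable homotopy classes out of a suspension spectrum are described by the colimit $[\Sigma^\infty \op{M_n}, Y] \cong \colim_k [\op{M_n} \otimes S^k, Y_k]$. For a coproduct $\bigvee_\alpha X_\alpha$ the $k$-th level is $\coprod_\alpha (X_\alpha)_k$, and compactness of $\op{M_n} \otimes S^k$ rewrites the inner term as the filtered colimit over finite subsets $F$ of $[\op{M_n} \otimes S^k, \coprod_{\alpha \in F} (X_\alpha)_k]$. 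Interchanging the two filtered colimits, over $k$ and over $F$, and using that finite coproducts agree with finite products in the additive stable homotopy category, the term indexed by a finite $F$ becomes $\bigoplus_{\alpha \in F} [\Sigma^\infty \op{M_n}, X_\alpha]$, and passing to the colimit over $F$ yields $\bigoplus_\alpha [\Sigma^\infty \op{M_n}, X_\alpha]$, as required.

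The main obstacle is precisely this last interchange: the finite-coproduct additivity that makes the computation close up is available only after stabilization, so one must perform the colimit over $k$ before splitting finite wedges, rather than working levelwise in $\Ind(\bfD)$ where wedges need not split. The other point demanding care is establishing genuinely homotopical, and not merely categorical, compactness of $\op{M_n}$ in $\Ind(\bfD)$, which is exactly where item \ref{theorem:TheModelStructureUnstable:item4} of \Cref{theorem:TheModelStructureUnstable} is indispensable.
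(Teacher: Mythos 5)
Your argument is correct and follows the same route as the paper: generation is deduced from \Cref{proposition:DetectingWeakEquivalences} applied to $X \to \ast$, and compactness comes from the left adjointness of $\Sigma^\infty$ together with the (categorical and, via item \ref{theorem:TheModelStructureUnstable:item4} of \Cref{theorem:TheModelStructureUnstable}, homotopical) compactness of $\op{M_n}$ in $\Ind(\bfD)$. The paper states the compactness half in one sentence; your colimit-interchange computation in $\Sp(\Ind(\bfD))$ is just a careful expansion of that sentence, not a different method.
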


\begin{proof}
	Compactness follows since $\Sigma^\infty$ is left adjoint and the objects $\op{M_n}$ are compact in $\Ind(\bfD)$, while it follows from \Cref{proposition:DetectingWeakEquivalences} that $\{\Sigma^\infty \op{M_n}\}$ is a generating set.
\end{proof}

Let $\scalM$ denote the full $\sSF$-enriched subcategory of $\SFlin(\Ind(\bfD))$ spanned by the suspension spectra of matrix algebras $\{\Sigma^\infty \op{M_n}\}_{n \geq 1}$. The definition of the $\sSF$-enrichment together with the left adjointness of $\Sigma^\infty$ shows that the hom-objects of $\scalM$ are given by
\begin{equation}\label{eq:scalMHomObjects}
	\MapSF(\Sigma^\infty \op{M_n}, \Sigma^\infty \op{M_k})(K) = \MapsSetp(\op{M_n}, \op{M_k} \otimes K).
\end{equation}
In particular, $\scalM$ can be viewed as a simplicial version of the topological category $\mathcal{M}_s$ defined in \cite[Def.\ 6.3]{AroneBarneaSchlank2021SpectralPresheavesV2}.

Now note that the restricted enriched Yoneda embedding
\begin{align*}
	\mathbb{U} \colon &\SF(\Ind(\bfD)) \to \Fun(\scalM^{\mathrm{op}}, \sSFlin); \\
	&\bbU(X)(\Sigma^\infty \op{M_n}) = \MapSF(\Sigma^\infty \op{M_n}, X)
\end{align*}
admits a left adjoint $\mathbb{T}$ (cf. \cite[Prop.\ 1.10]{MayGuillou2020EnrichedPresheaf}). As mentioned above, $\Sigma^\infty$ is left Quillen, hence the objects $\Sigma^\infty \op{M_n}$ are all cofibrant in $\SFlin(\Ind(\bfD))$. In particular, the restricted Yoneda embedding $\mathbb U$ sends (trivial) fibrations to pointwise (trivial) fibrations, hence it is right Quillen when $\Fun(\scalM^{\mathrm{op}},\sSFlin)$ is endowed with the projective model structure (which exists by Theorem 7.2 of \cite{SchwedeShipley2003Equivalences}). Theorem 1.36 of \cite{MayGuillou2020EnrichedPresheaf} gives conditions under which this adjunction is a Quillen equivalence. One of the conditions translates to every object of $\scalM$ being fibrant in $\SFlin(\Ind(\bfD))$. While this does not hold, their proof still goes through in our particular case.

\begin{theorem}\label{theorem:NoncommutativeSpectraAsSimplicialPresheafCat}
	The adjunction $\bbT \dashv \bbU$ is a Quillen equivalence between the model categories $\Fun(\scalM^{\mathrm{op}}, \sSFlin)$ and $\SFlin(\Ind(\bfD))$. In particular, the $\infty$-category of noncommutative CW-spectra is modeled by $\Fun(\scalM^{\mathrm{op}}, \sSFlin)$.
\end{theorem}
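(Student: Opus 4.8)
The plan is to verify that $\bbT \dashv \bbU$ satisfies a Schwede--Shipley type recognition criterion, following the proof of \cite[Thm.~1.36]{MayGuillou2020EnrichedPresheaf} and isolating the single place where the (failing) fibrancy hypothesis on the objects of $\scalM$ is used. Since $\bbU$ is already known to be right Quillen, Hovey's criterion \cite[Cor.~1.3.16]{Hovey1999ModelCats} reduces the claim to two statements: that $\bbU$ reflects weak equivalences between fibrant objects, and that the derived unit $P \to \bbU(R\bbT P)$ is a weak equivalence for every cofibrant $P \in \Fun(\scalM^{op}, \sSFlin)$, where $R$ denotes fibrant replacement. The first statement is essentially \Cref{proposition:DetectingWeakEquivalences}: for fibrant $X,Y$, the map $\bbU(X) \to \bbU(Y)$ is a projective weak equivalence precisely when each $\MapSF(\Sigma^\infty \op{M_n}, X) \to \MapSF(\Sigma^\infty \op{M_n}, Y)$ is a weak equivalence in $\sSFlin$, which by that proposition holds if and only if $X \to Y$ is a weak equivalence.

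For the derived unit I would first reduce to the generators. Because the $\Sigma^\infty \op{M_n}$ form a set of \emph{compact} generators of the stable model category $\SFlin(\Ind(\bfD))$, the right derived functor of $\bbU$ preserves homotopy colimits; as both categories are stable, the class of cofibrant $P$ on which the derived unit is a weak equivalence is then closed under homotopy colimits, shifts and retracts. Since it contains the representables $\scalM(-, \Sigma^\infty \op{M_n})$, which generate, it is all of the cofibrant objects. It thus remains to treat a representable $P = \scalM(-, \Sigma^\infty \op{M_n})$. Here $\bbT P \cong \Sigma^\infty \op{M_n}$ by the enriched Yoneda lemma, the strict unit $P \to \bbU\bbT P$ is an isomorphism by \eqref{eq:scalMHomObjects}, and so the derived unit is identified with $\bbU$ applied to a fibrant replacement $\Sigma^\infty \op{M_n} \wearrow R\Sigma^\infty \op{M_n}$; concretely, one must show that for every $k$ the map
\[\MapSF(\Sigma^\infty \op{M_k}, \Sigma^\infty \op{M_n}) \to \MapSF(\Sigma^\infty \op{M_k}, R\Sigma^\infty \op{M_n})\]
is a weak equivalence in $\sSFlin$.

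This last step is the main obstacle, and it is exactly where \cite[Thm.~1.36]{MayGuillou2020EnrichedPresheaf} invokes fibrancy of the objects of $\scalM$ — which fails here, since $\Sigma^\infty \op{M_n}$ is not a linear functor. I would circumvent this by proving directly that $\MapSF(\Sigma^\infty \op{M_k}, -)$ is \emph{homotopical}, i.e.\ preserves all weak equivalences and not merely those between fibrant objects. Using the computation $\MapSF(\Sigma^\infty \op{M_k}, X)(K) = \MapsSetp(\op{M_k}, X(K))$ extracted from the proof of \Cref{prop:SFlinEnrichment}, this functor is levelwise post-composition with $G := \MapsSetp(\op{M_k}, -) \colon \Ind(\bfD) \to \s\Set_*$. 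The functor $G$ is itself homotopical, since every object of $\Ind(\bfD)$ is fibrant and $\op{M_k}$ is cofibrant by \Cref{theorem:TheModelStructureUnstable}, and moreover it commutes up to weak equivalence with the operations defining stable equivalences: it preserves homotopy pullbacks and loops (being a derived mapping space), and it commutes with filtered homotopy colimits by compactness of $\op{M_k}$ together with the stability of weak equivalences under filtered colimits from \Cref{theorem:TheModelStructureUnstable}. Consequently post-composition with $G$ commutes with linearization and sends the stable equivalence $\Sigma^\infty \op{M_n} \wearrow R\Sigma^\infty \op{M_n}$ to a stable equivalence in $\sSFlin$, as needed.

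Granting these verifications, $\bbT \dashv \bbU$ is a Quillen equivalence. The final sentence of the theorem then follows by combining this equivalence with the earlier identification of $\SFlin(\Ind(\bfD))$ with the model for the $\infty$-category of noncommutative CW-spectra. The one genuinely delicate point, which I would write out with care, is the compatibility of $G$ with linearization in the previous paragraph, as everything else is either formal or already established earlier in the paper.
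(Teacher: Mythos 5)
Your proposal is correct and follows essentially the same route as the paper: the paper invokes \cite[Thm.~1.36]{MayGuillou2020EnrichedPresheaf} (whose proof is precisely the Hovey-criterion-plus-compact-generation argument you unfold) and isolates the same crux, namely that the derived unit on representables reduces to showing that $\MapSF(\Sigma^\infty \op{M_k},-)$ sends a fibrant replacement $\Sigma^\infty \op{M_n} \wearrow R\Sigma^\infty \op{M_n}$ to a weak equivalence in $\sSFlin$, which is exactly \Cref{lemma:bbUAlreadyDerived}. Your justification of that step---commuting levelwise postcomposition with $\MapsSetp(\op{M_k},-)$ past the linearization $\colim_{m}\Omega^m(-)\Sigma^m$ using compactness of $\op{M_k}$, preservation of loops, and the fact that all objects of $\Ind(\bfD)$ are fibrant---is the same computation as the paper's proof of that lemma, merely phrased as ``commutation with linearization'' rather than carried out on the explicit fibrant replacement.
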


\begin{proof}
	We leave it to the reader to verify that except for the fibrancy condition on the objects of $\scalM$, all conditions of Theorem 1.36 of \cite{MayGuillou2020EnrichedPresheaf} are satisfied.
	
	We claim that the fibrancy condition is not needed in our particular case. A careful inspection of the proof of \cite[Thm.\ 1.36]{MayGuillou2020EnrichedPresheaf} shows that the only reason why the fibrancy of the objects of $\scalM$ is necessary is so that the value of $\bbU$ agrees with that of its right-derived functor $\bbR \bbU$ when applied to an object of $\scalM$ (that is, the map $\bbU (\Sigma^\infty \op{M_n}) \to \bbR \bbU (\Sigma^\infty \op{M_n})$ is a weak equivalence for every $n \geq 1$). Even though the objects $\Sigma^\infty \op{M_n}$ are not fibrant in $\SFlin(\Ind(\bfD))$, by \Cref{lemma:bbUAlreadyDerived} below this property is still satisfied. We conclude that $\bbT \dashv \bbU$ is a Quillen equivalence.
\end{proof}

\begin{lemma}\label{lemma:bbUAlreadyDerived}
	The map $\bbU (\Sigma^\infty \op{M_n}) \to \bbR \bbU( \Sigma^\infty \op{M_n})$ is a weak equivalence in $\Fun(\scalM^{\mathrm{op}}, \sSFlin)$ for every $n \geq 1$.
\end{lemma}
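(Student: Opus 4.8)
The plan is to unwind the definition of $\bbR\bbU$ and reduce the claim to a pointwise statement about Lydakis' linearization. Since $\bbU$ is right Quillen, its total right derived functor is computed via a fibrant replacement: fixing a fibrant replacement $j \colon \Sigma^\infty \op{M_n} \to P$ in $\SFlin(\Ind(\bfD))$, the map in question is $\bbU(j)$ and $\bbR\bbU(\Sigma^\infty \op{M_n}) \cong \bbU(P)$. Weak equivalences in the projective model structure on $\Fun(\scalM^{op},\sSFlin)$ are detected pointwise, so it suffices to prove that for every $k \geq 1$ the map $\MapSF(\Sigma^\infty \op{M_k}, j)$ is a weak equivalence in $\sSFlin$. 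Here I would use the formula established inside the proof of \Cref{proposition:DetectingWeakEquivalences} (via the fact that $\Sigma^\infty \op{M_k}$ is the free simplicial functor on $\op{M_k}$ at $S^0$), namely $\MapSF(\Sigma^\infty \op{M_k}, Z)(K) = \MapsSetp(\op{M_k}, Z(K))$ for any $Z$.

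Next I would observe that the target $\MapSF(\Sigma^\infty \op{M_k}, P)$ is already fibrant in $\sSFlin$: this is exactly the statement, proved inside \Cref{prop:SFlinEnrichment}, that $\MapSF(Z,L)$ is a linear homotopy functor whenever $Z$ is cofibrant and $L$ is fibrant. Thus it is enough to show that $\MapSF(\Sigma^\infty \op{M_k}, j)$ exhibits this target as a fibrant replacement of the source $F_k := \MapSF(\Sigma^\infty \op{M_k}, \Sigma^\infty \op{M_n})$, the functor $K \mapsto \MapsSetp(\op{M_k}, \op{M_n} \otimes K)$. To do so I would use an explicit model for $P$. Since $\op{M_n}\otimes(-)$ is left Quillen from $\s\Set_*$ and hence already a homotopy functor, the only obstruction to fibrancy in $\SFlin(\Ind(\bfD))$ is non-linearity, so the Biedermann--Röndigs fibrant replacement is computed by the stabilization $P(K) \simeq \operatorname{hocolim}_j \Omega^j\bigl(\op{M_n}\otimes (K \wedge S^j)\bigr)$, with $\Omega$ the levelwise loops in $\Ind(\bfD)$ and $j$ the canonical map into the colimit.

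The heart of the argument is then to commute $\Phi := \MapsSetp(\op{M_k},-)$ past this construction. I would use three properties of $\Phi$ as a simplicial functor on $\Ind(\bfD)$: it preserves weak equivalences (every object is fibrant and $\op{M_k}$ is cofibrant); it commutes with the sequential homotopy colimit, because $\op{M_k}$ is compact and weak equivalences are stable under filtered colimits by item \ref{theorem:TheModelStructureUnstable:item4} of \Cref{theorem:TheModelStructureUnstable}; and it commutes with loops, $\Phi(\Omega Z) \cong \Omega\Phi(Z)$, by the cotensor--hom adjunction. Applying these gives $\Phi(P(K)) \simeq \operatorname{hocolim}_j \Omega^j \MapsSetp(\op{M_k}, \op{M_n}\otimes(K\wedge S^j)) = \operatorname{hocolim}_j \Omega^j F_k(K \wedge S^j)$, which is precisely Lydakis' linearization of $F_k$, with $\MapSF(\Sigma^\infty \op{M_k}, j)$ corresponding to the canonical linearization map $F_k \to L F_k$. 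As the latter is a weak equivalence in $\sSFlin$ and $k$ was arbitrary, $\bbU(j)$ is a pointwise weak equivalence and the lemma follows.

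The step I expect to be the main obstacle is the middle one: pinning down the explicit homotopy-colimit model for the fibrant replacement $P$ and checking that $\Phi$ genuinely intertwines the stabilization in $\SFlin(\Ind(\bfD))$ with Lydakis' linearization in $\sSFlin$. This requires knowing that, because $\Sigma^\infty\op{M_n}$ is already a homotopy functor, the Biedermann--Röndigs replacement reduces to the ``$\operatorname{hocolim}_j \Omega^j\Sigma^j$'' stabilization, and that the homotopy colimit involved is a genuine filtered colimit along which $\Phi$ is homotopical --- this is exactly where compactness of $\op{M_k}$ and item \ref{theorem:TheModelStructureUnstable:item4} of \Cref{theorem:TheModelStructureUnstable} are essential. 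Once these identifications are in place, the remaining manipulations (commuting $\Phi$ with loops and with the colimit) are formal.
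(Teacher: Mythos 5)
Your proposal is correct and follows essentially the same route as the paper: both use the Biedermann--Röndigs fibrant replacement of $\Sigma^\infty \op{M_n}$ as the stabilization $\colim_m \Omega^m \circ \Sigma^\infty \op{M_n} \circ \Sigma^m$ (valid because $\Sigma^\infty\op{M_n}$ is already a pointwise fibrant homotopy functor), commute $\MapSF(\Sigma^\infty\op{M_k},-)$ past this colimit using compactness and the cotensor adjunction, and identify the resulting map with the Lydakis linearization map of the pointwise fibrant homotopy functor $K \mapsto \MapsSetp(\op{M_k},\op{M_n}\otimes K)$, which is a weak equivalence in $\sSFlin$. The only difference is presentational: you spell out the homotopy-invariance bookkeeping that the paper leaves implicit.
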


\begin{proof}
	The suspension spectrum $\Sigma^\infty \op{M_n} \colon \s\Set_* \to \Ind(\bfD)$ is a pointwise fibrant homotopy functor, hence by how the model structure on $\SFlin(\Ind(\bfD))$ is constructed in \cite[Thm.\ 5.8]{BiedermannRondigs2014CalculusII}, an explicit fibrant replacement can be given by
		\begin{equation*}
			\Sigma^\infty \op{M_n} \wearrow \colim_{m \in \bbN} \Omega^m \circ \Sigma^\infty \op{M_n} \circ \Sigma^m.
		\end{equation*}
	It therefore suffices to show that the map
	\begin{equation}\label{eq:bbUAppliedToStabilization}
		\bbU(\Sigma^\infty \op{M_n})(\Sigma^\infty \op{M_k}) \to \bbU(\colim_{m \in \bbN} \Omega^m \Sigma^\infty \op{M_n} \circ \Sigma^m)(\Sigma^\infty \op{M_k})
	\end{equation}
	is an equivalence in $\sSFlin$ for every $k,n \geq 1$. Because of the compactness of the object $\Sigma^\infty \op{M_k}$, the right-hand side is isomorphic to
	\begin{align*} 
		&\bbU(\colim_{m \in \bbN} \Omega^m \Sigma^\infty \op{M_n} \circ \Sigma^m)(\Sigma^\infty \op{M_k}) = \MapSF(\Sigma^\infty \op{M_k}, \colim_{m \in \bbN} \Omega^m \Sigma^\infty \op{M_n} \circ \Sigma^m) \cong \\
		&\colim_{m \in \bbN} \Omega^m \MapSF(\Sigma^\infty \op{M_k}, \Sigma^\infty \op{M_n}) \circ \Sigma^m = \colim_{m \in \bbN} \Omega^m \bbU(\Sigma^\infty \op{M_n})(\Sigma^\infty \op{M_k})  \circ \Sigma^m.
	\end{align*}
	In particular, the map \eqref{eq:bbUAppliedToStabilization} is simply the stabilization map
	\[\bbU(\Sigma^\infty \op{M_n})(\Sigma^\infty \op{M_k}) \to \colim_{m \in \bbN} \Omega^m \bbU(\Sigma^\infty \op{M_n})(\Sigma^\infty \op{M_k}) \circ \Sigma^m,\]
	which is an equivalence in $\sSFlin$ since $\bbU(\Sigma^\infty \op{M_n})(\Sigma^\infty \op{M_k})$ is a pointwise fibrant homotopy functor.
\end{proof}

The symmetric monoidal structure of $\SFlin(\Ind(\bfD))$ from \Cref{prop:NuclearSymmetricMonoidalSpectra} restricts to a symmetric monoidal structure on $\scalM$. It is not hard to see that the projective model structure on $\Fun(\scalM^{\mathrm{op}},\sSFlin)$ is symmetric monoidal under the Day convolution. In the case that $\bfD^{\mathrm{op}}$ is the category of nuclear separable $C^*$-algebras, the Quillen equivalence from above can be upgraded to a monoidal one.

\begin{theorem}
	In the case that $\bfD$ is the category of separable nuclear $C^*$-algebras, the Quillen equivalence of \Cref{theorem:NoncommutativeSpectraAsSimplicialPresheafCat} is strong monoidal.
\end{theorem}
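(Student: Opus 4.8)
The plan is to show that the left adjoint $\bbT$ is strong symmetric monoidal; since \Cref{theorem:NoncommutativeSpectraAsSimplicialPresheafCat} already provides that $\bbT \dashv \bbU$ is a Quillen equivalence, this upgrades it to a strong monoidal Quillen equivalence. The conceptual input is the universal property of the Day convolution: for a small symmetric monoidal $\sSF$-enriched category $\mathcal C$, the presheaf category $\Fun(\mathcal C^{op}, \sSF)$ equipped with the Day convolution is the free symmetric monoidal $\sSF$-cocompletion of $\mathcal C$. Concretely, the Yoneda embedding $y \colon \mathcal C \to \Fun(\mathcal C^{op}, \sSF)$ is strong symmetric monoidal, and restriction along $y$ induces an equivalence between cocontinuous strong symmetric monoidal $\sSF$-functors $\Fun(\mathcal C^{op}, \sSF) \to \mathcal E$ and strong symmetric monoidal $\sSF$-functors $\mathcal C \to \mathcal E$, for any cocomplete symmetric monoidal $\sSF$-category $\mathcal E$. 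I will apply this with $\mathcal C = \scalM$ and $\mathcal E = \SFlin(\Ind(\bfD))$.

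First I would identify $\bbT$ as the cocontinuous extension of the inclusion along $y$. Writing $\iota \colon \scalM \hookrightarrow \SFlin(\Ind(\bfD))$ for the inclusion, the defining adjunction together with the enriched Yoneda lemma gives natural isomorphisms $\MapSF(\bbT(y c), X) \cong \MapSF(y c, \bbU X) \cong \bbU X (c) = \MapSF(\iota c, X)$ for $c \in \scalM$ and $X \in \SFlin(\Ind(\bfD))$, so $\bbT \circ y \cong \iota$. As $\bbT$ is cocontinuous, being a left adjoint, and every enriched presheaf is a colimit of representables, $\bbT$ is therefore the (essentially unique) cocontinuous $\sSF$-functor extending $\iota$.

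Next I would verify that $\iota$ is strong symmetric monoidal, which is what makes the universal property applicable. This reduces to checking that $\scalM$ is closed under the tensor product of $\SFlin(\Ind(\bfD))$ and contains its unit. The suspension spectrum functor $\Sigma^\infty \colon \Ind(\bfD) \to \SFlin(\Ind(\bfD))$ is itself strong symmetric monoidal: unwinding the defining coend of the Day convolution shows $\Sigma^\infty C \otimes \Sigma^\infty D \cong \Sigma^\infty(C \otimes D)$, and $\Sigma^\infty$ sends the unit $\op{M_1} = \op{\bbC}$ of $\Ind(\bfD)$ to the Day convolution unit $K \mapsto \op{M_1} \otimes K$. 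Combined with the identification $M_n \otimes M_k \cong M_{nk}$ used in \Cref{prop:NuclearSymmetricMonoidalSpectra}, this yields $\Sigma^\infty \op{M_n} \otimes \Sigma^\infty \op{M_k} \cong \Sigma^\infty \op{M_{nk}}$, so $\scalM$ is closed under $\otimes$, and its unit $\Sigma^\infty \op{M_1}$ lies in $\scalM$. Hence $\iota$ is strong symmetric monoidal, and feeding this into the universal property shows that its cocontinuous extension $\bbT$ is strong symmetric monoidal as well.

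The main obstacle I expect to be organizational rather than conceptual: one must make sure the universal property of the Day convolution is invoked in the correct $\sSF$-enriched generality and that ``strong symmetric monoidal cocontinuous $\sSF$-functor'' is matched with the model-categorical notion of a strong monoidal Quillen equivalence. No further compatibility between the tensor and the model structures needs to be re-examined, since \Cref{prop:NuclearSymmetricMonoidalSpectra} already shows that $\SFlin(\Ind(\bfD))$ is a symmetric monoidal model category and that $\Fun(\scalM^{op}, \sSFlin)$ is one under Day convolution; the remaining content is precisely the point-set strong monoidality of $\bbT$ established above. A minor point to double-check is the coend computation exhibiting $\Sigma^\infty$ as strong monoidal, which is the only genuine calculation in the argument.
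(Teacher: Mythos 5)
Your proposal is correct and follows essentially the same route as the paper: reduce to strong monoidality of $\bbT$, invoke the universal property of Day convolution to restrict to the image of the enriched Yoneda embedding, and identify $\bbT \circ y$ with the inclusion $\scalM \hookrightarrow \SFlin(\Ind(\bfD))$, which is strong monoidal because the tensor product on $\scalM$ is by construction the restriction of the one on $\SFlin(\Ind(\bfD))$. The extra details you supply (the natural isomorphisms identifying $\bbT \circ y \cong \iota$, and the closure of $\scalM$ under $\otimes$ via $\Sigma^\infty \op{M_n} \otimes \Sigma^\infty \op{M_k} \cong \Sigma^\infty \op{M_{nk}}$) are exactly what the paper leaves implicit in the phrase ``by construction''.
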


\begin{proof}
	We need to show that $\bbT$ is strong monoidal. By the universal property of Day convolution, it suffices to show that $\bbT$ is strong monoidal when restricted to the image of the enriched Yoneda embedding $\scalM \hookrightarrow \Fun(\scalM^{\mathrm{op}}, \sSFlin)$. This holds since the composition of $\bbT$ with the enriched Yoneda embedding is simply the inclusion $\scalM \hookrightarrow \SFlin(\Ind(\bfD))$, which is strong monoidal by construction.
\end{proof}

\subsection{Simplicial versus topological functors}

Recall that spectra can also be modeled as pointed linear topological functors from the category of finite pointed CW-complexes $\CWf$ to the category of all pointed (compactly generated weak Hausdorff) spaces $\Top_*$, called $\mathscr{W}$-spectra in \cite{MMSS2001DiagramSpectra}. We will denote the model category of $\mathscr{W}$-spectra by $\SpW$. This is a symmetric monoidal model category under the Day convolution product.

Since the category of $C^*$-algebras naturally comes with a $\Top_*$-enrichment, it is perhaps more natural to consider the $\SpW$-enriched analogue of $\scalM$. This is the category denoted by $\mathcal{M}_s$ in \cite{AroneBarneaSchlank2021SpectralPresheavesV2}, but we will denote it by $\tcalM$ to emphasize that it is the "topological" analogue of $\scalM$. Its objects are the positive natural numbers $n$ (which we think of as the suspension spectra of the matrix algebras $M_n$) and for two objects $n$ and $k$, the hom-object $\MapSpW(n, k)$ is defined by
\[\CWf \to \Top_*; \quad W \mapsto \MapTopp(\op{M_n}, \op{M_k} \otimes W). \]
Composition is defined as in \cite[Def.\ 5.12]{AroneBarneaSchlank2021SpectralPresheavesV2}. The tensor product of matrix algebras endows this category with a symmetric monoidal structure.

It is shown in \cite[Thm.\ 19.11]{MMSS2001DiagramSpectra} that there is a (strong monoidal) Quillen equivalence $\bbPT : \sSFlin \rightleftarrows \SpW : \bbSU$, where the right adjoint is defined by
\[\bbSU(F) \colon \pfinsSet \to \s\Set; \quad \bbSU(F)(K) = \SingTop(F(|K|)).\]
It follows from the characterization of the hom-sets given in \eqref{eq:scalMHomObjects} that $\scalM$ is isomorphic to the category $\bbSU(\tcalM)$ obtained by applying $\bbSU$ to each hom-object of $\tcalM$, and one easily sees that their symmetric monoidal structures agree. In particular, by applying $\bbSU$ to the values of a functor $F \in \Fun(\tcalM^{\mathrm{op}}, \SpW)$, we obtain a functor
\[\bbSU \colon \Fun(\tcalM^{\mathrm{op}}, \SpW) \to \Fun(\scalM^{\mathrm{op}}, \sSFlin).\]

\begin{theorem}
	The functor $\bbSU \colon \Fun(\tcalM^{\mathrm{op}}, \SpW) \to \Fun(\scalM^{\mathrm{op}}, \sSFlin)$ is the right adjoint of a strong monoidal Quillen equivalence, where both the domain and codomain are endowed with the projective model structure. In particular, $\Fun(\tcalM^{\mathrm{op}}, \SpW)$ models the $\infty$-category of non-commutative CW-spectra.
\end{theorem}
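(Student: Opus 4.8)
The plan is to promote the strong monoidal Quillen equivalence $\bbPT \colon \sSFlin \rightleftarrows \SpW \colon \bbSU$ of \cite[Thm.~19.11]{MMSS2001DiagramSpectra} to the level of enriched presheaf categories, exploiting the identification $\scalM \cong \bbSU(\tcalM)$ of symmetric monoidal $\sSF$-categories. First I would construct the left adjoint. Since $\bbSU$ is lax symmetric monoidal, postcomposition is exactly the functor $\bbSU \colon \Fun(\tcalM^{op}, \SpW) \to \Fun(\scalM^{op}, \sSFlin)$ of the statement; its left adjoint $\bbPT$ is obtained by applying the strong monoidal functor $\bbPT$ to hom-objects and then left Kan extending along the identity-on-objects $\SpW$-functor $\bbPT(\scalM) = \bbPT\bbSU(\tcalM) \to \tcalM$ induced by the counit. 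Being a base-change left adjoint, $\bbPT$ carries the representable $\scalM(-, \Sigma^\infty \op{M_n})$ to the representable $\tcalM(-, n)$. The right adjoint $\bbSU$ preserves projective (trivial) fibrations, since these are detected pointwise and $\bbSU \colon \SpW \to \sSFlin$ is right Quillen, so $\bbPT \dashv \bbSU$ is a Quillen pair for the projective model structures (whose existence is guaranteed as in \cite{SchwedeShipley2003Equivalences}). Strong monoidality of $\bbPT$ follows from the universal property of Day convolution: it suffices to check it on representables, where it reduces to the compatibility $\Sigma^\infty \op{M_n} \otimes \Sigma^\infty \op{M_k} \cong \Sigma^\infty \op{M_{n \times k}}$ and its counterpart in $\tcalM$, both governed by $M_n \otimes M_k = M_{n \times k}$.

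To show this Quillen pair is a Quillen equivalence, I would verify that $\bbSU$ reflects weak equivalences between fibrant objects and that the derived unit is a weak equivalence on a set of generators. The first point is immediate: a projectively fibrant presheaf is pointwise fibrant, and $\bbSU$ reflects weak equivalences between fibrant objects of $\SpW$ because $\bbPT \dashv \bbSU$ is a Quillen equivalence. For the second point, both presheaf categories are stable and compactly generated by the representables $\{\scalM(-, \Sigma^\infty \op{M_n})\}$ and $\{\tcalM(-, n)\}$, which $\bbPT$ matches up; so by stability it suffices to check the derived unit on these generators. By the enriched Yoneda lemma, the derived unit at $\scalM(-, \Sigma^\infty \op{M_n})$ is controlled by the comparison of hom-objects $\bbSU(\tcalM(n,k)) \to \mathbb{R}\bbSU(\tcalM(n,k))$, where $\tcalM(n,k) = \MapSpW(n,k)$ and $\bbSU(\tcalM(n,k)) = \scalM(\Sigma^\infty \op{M_n}, \Sigma^\infty \op{M_k})$.

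The main obstacle is therefore to show that $\bbSU$ is already homotopically correct on these hom-objects, i.e.\ that $\bbSU(\tcalM(n,k)) \to \mathbb{R}\bbSU(\tcalM(n,k))$ is a weak equivalence even though $\tcalM(n,k)$ need not be fibrant in $\SpW$. This is entirely parallel to \Cref{lemma:bbUAlreadyDerived}: the $\mathscr{W}$-space $W \mapsto \MapTopp(\op{M_n}, \op{M_k} \otimes W)$ is a pointwise fibrant homotopy functor, so a fibrant replacement is given by the stabilization $\colim_{m} \Omega^m(-)\Sigma^m$; and since $\bbSU(F)(K) = \SingTop(F(|K|))$ is computed pointwise by $\SingTop$, which preserves filtered colimits and loop objects, $\bbSU$ commutes with this stabilization. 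Hence $\bbSU$ sends the fibrant replacement of $\tcalM(n,k)$ to a fibrant replacement of $\scalM(\Sigma^\infty \op{M_n}, \Sigma^\infty \op{M_k})$, so $\bbSU$ is derived on hom-objects, and the comparison above is a weak equivalence because $\bbPT \dashv \bbSU$ is a Quillen equivalence on the base.

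Combining these steps establishes that $\bbPT \dashv \bbSU$ is a strong monoidal Quillen equivalence. Finally, since $\Fun(\scalM^{op}, \sSFlin)$ models the $\infty$-category of noncommutative CW-spectra by \Cref{theorem:NoncommutativeSpectraAsSimplicialPresheafCat}, composing the two equivalences yields the concluding claim that $\Fun(\tcalM^{op}, \SpW)$ does as well.
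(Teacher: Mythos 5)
Your argument is correct, and it reaches the same two end points as the paper (the Quillen equivalence and the reduction of strong monoidality to representables via the universal property of Day convolution, where the two proofs coincide verbatim), but the route to the Quillen equivalence is genuinely different. The paper simply cites Theorems 6.5.(2) and 7.2 of \cite{SchwedeShipley2003Equivalences}, whereas you unpack that citation: you build the left adjoint as base change along $\bbPT$ followed by enriched left Kan extension along $\bbPT(\scalM)\to\tcalM$, check the Quillen pair pointwise, and verify the equivalence by the standard ``reflects weak equivalences between fibrant objects plus derived unit on compact generators'' criterion. The substantive payoff of your version is the last step: the derived unit on representables hinges on $\bbSU$ being already derived on the hom-objects $\tcalM(n,k)$, which are pointwise fibrant homotopy functors but \emph{not} fibrant (linear) in $\SpW$ --- exactly the same fibrancy issue the paper confronts for \cite[Thm.~1.36]{MayGuillou2020EnrichedPresheaf} and resolves via \Cref{lemma:bbUAlreadyDerived}. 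Your stabilization argument ($\bbSU$ commutes with $\colim_m \Omega^m(-)\Sigma^m$ because $\SingTop$ preserves loops and the relevant filtered colimits) is the correct analogue of that lemma and makes explicit a hypothesis check that the paper's one-line citation leaves to the reader; the only point deserving a sentence of care is the interaction of $\SingTop$ with the filtered colimit in $\Top_*$, which is at the same level of rigor as the paper's own proof of \Cref{lemma:bbUAlreadyDerived}. One cosmetic remark: once you know $\bbSU$ carries the fibrant replacement of $\tcalM(n,k)$ to a fibrant replacement of $\scalM(\Sigma^\infty\op{M_n},\Sigma^\infty\op{M_k})$, the comparison map is a weak equivalence simply because stabilization maps of pointwise fibrant homotopy functors are stable equivalences in $\sSFlin$; the appeal to the Quillen equivalence on the base is not needed at that point.
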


\begin{proof}
	The fact that $\bbSU$ is the right adjoint of a Quillen equivalence follows by combining Theorems 6.5.(2) and 7.2 of \cite{SchwedeShipley2003Equivalences}. To see that it is a strong monoidal Quillen equivalence, we need to show that the left adjoint $\bbPT_!$ is strong monoidal. By the universal property of Day convolution, it suffices to show this for the restriction of the left adjoint $\bbPT_!$ to the image of the Yoneda embedding $\scalM \hookrightarrow \Fun(\scalM^{\mathrm{op}}, \sSFlin)$. The left adjointness of $\bbPT_!$ implies that it takes representables to representables, hence this reduces to showing that $\scalM \to \bbSU(\tcalM)$ is symmetric monoidal. But this is just the symmetric monoidal isomorphism mentioned above.
\end{proof}

\appendix

\section{Minimal cofibration test categories}\label{appendix:minimal}

The goal of this appendix is to describe a simplification of the main construction from \cite{BlomMoerdijk2020SimplicialProV1}. For two objects $c$ and $d$ in a simplicial category $\bfD$, the simplicial hom-set is denoted by $\MapsSet(c,d)$. Recall that the tensor of an object $c$ in $\bfD$ with a simplicial set $M$ is defined (if it exists) as the essentially unique object $c \otimes M$ such that there are natural\footnote{Here natural means natural in the enriched sense; cf. \cite[\S 1.2]{Kelly1982BasicConcepts}.} isomorphisms $\MapsSet(M, \MapsSet(c,d)) \cong \MapsSet(c \otimes M, d)$ for every $d$ in $\bfD$.

\begin{definition}\label{appendix:definition}
A \emph{minimal cofibration test category} $(\bfD,\bfT)$ consists of an essentially small simplicial category $\bfD$ together with a subset $\bfT \subset \Ob(\bfD)$ of \emph{test objects}, satisfying the following properties:
\begin{enumerate}[(i)]
    \item\label{appendix:definition:item1} The category $\bfD$ admits all finite colimits.
    \item\label{appendix:definition:item2} The category $\bfD$ admits tensors by finite simplicial sets, which moreover commute with finite colimits of $\bfD$.
    \item\label{appendix:definition:item3} For any object $c$ of $\bfD$ and any test object $t \in \bfT$, the simplicial set $\MapsSet(t,c)$ is a Kan complex.
\end{enumerate}
\end{definition}

\begin{example}\label{appendix:exampleTopological}
Let $\bfD$ be an (essentially) small topological category that admits all finite colimits and tensors by the unit interval $I$, and moreover assume that the functor ${- \otimes I} \colon \bfD \to \bfD$ preserves finite colimits. Applying the singular complex functor to the hom-sets of $\bfD$, we obtain a simplicial category that we will also denote by $\bfD$. Then for any set of objects $\bfT \subset \Ob(\bfD)$, the pair $(\bfD, \bfT)$ is a minimal cofibration test category. Items \ref{appendix:definition:item1} and \ref{appendix:mainthm:item3} are obvious. To see that item \ref{appendix:definition:item2} holds, note that tensors by a simplicial set $M$ in the simplicial category $\bfD$ agree with tensors by the geometric realization $|M|$ when $\bfD$ is viewed as a topological category. Since any finite simplicial set is a finite colimit of representables $\Delta[n]$, it suffices to show that $\bfD$, viewed as a topological category, admits tensors by the topological spaces $|\Delta[n]|$. This follows since
\[c \otimes |\Delta[n]| \cong c \otimes (\underbrace{I \times \ldots \times I}_{n\text{-times}}) \cong c \otimes \underbrace{I \otimes \ldots \otimes I}_{n\text{-times}}.\]
\end{example}

For any simplicial category $\bfD$, the ind-category $\Ind(\bfD)$ is again simplicial, with the enrichment defined by
\[\MapsSet(\{c_i\}, \{d_j\}) = \lim_i \colim_j \MapsSet(c_i, d_j).\]

The main result of this appendix is the following.

\begin{theorem}\label{appendix:mainthm}
Let $(\bfD,\bfT)$ be a minimal cofibration test category. Then there exists a cofibrantly generated simplicial model structure on $\Ind(\bfD)$ with the following properties:
\begin{enumerate}[(i)]
    \item\label{appendix:mainthm:item1} A map $C \to D$ is a weak equivalence or fibration if and only if for any $t \in \bfT$, the map
    \[\MapsSet(t,C) \to \MapsSet(t,D) \]
    is a weak homotopy equivalence or Kan fibration, respectively.
    \item\label{appendix:mainthm:item2} Any object of $\Ind(\bfD)$ is fibrant.
    \item\label{appendix:mainthm:item3} A set of generating cofibrations is given by
    \[\mathcal{I} = \{t \otimes \partial \Delta[n] \to t \otimes \Delta[n] \mid n \geq 0,\quad t \in \bfT \} \]
    and a set of generating trivial cofibrations by
    \[\mathcal{J} = \{t \otimes \Lambda^k[n] \to t \otimes \Delta[n] \mid 0 \leq k \leq n,\quad n \neq 0,\quad t \in \bfT \}\]
    \item \label{appendix:mainthm:item4} The weak equivalences are stable under filtered colimits.
\end{enumerate}
\end{theorem}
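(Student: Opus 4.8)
The plan is to realise this model structure through Kan's recognition theorem for cofibrantly generated model categories \cite[Thm.~11.3.1]{Hirschhorn2003Model}; equivalently, one may view it as the structure right-induced from the (factorwise Kan--Quillen) model structure on $\prod_{t \in \bfT}\s\Set$ along $\prod_{t \in \bfT}\MapsSet(t,-)\colon \Ind(\bfD) \to \prod_{t\in\bfT}\s\Set$, whose left adjoint sends $\{K_t\}_t$ to $\coprod_t t\otimes K_t$. First I would record the formal input: since $\bfD$ is essentially small with finite colimits, $\Ind(\bfD)$ is locally finitely presentable, hence bicomplete and such that every object is small, so both $\mathcal I$ and $\mathcal J$ permit the small object argument. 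I define $W$ and the fibrations by the conditions in \ref{appendix:mainthm:item1}; closure of $W$ under retracts and two-out-of-three is inherited from weak homotopy equivalences in $\s\Set$ via the functors $\MapsSet(t,-)$.

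The computations feeding the recognition theorem are lifting-property identifications. Because each $t\otimes M$ with $M$ finite is compact, the tensor--hom adjunction $\MapsSet(t\otimes M, C)\cong \MapsSet(M,\MapsSet(t,C))$ holds for all $C\in\Ind(\bfD)$, so a lifting problem of $f$ against $t\otimes\partial\Delta[n]\to t\otimes\Delta[n]$ (resp.\ $t\otimes\Lambda^k[n]\to t\otimes\Delta[n]$) transposes to one for $\MapsSet(t,f)$ against $\partial\Delta[n]\to\Delta[n]$ (resp.\ $\Lambda^k[n]\to\Delta[n]$). Using that $\MapsSet(t,C)$ is always a Kan complex by \ref{appendix:definition:item3}, this yields that the maps with the right lifting property against $\mathcal I$ are exactly the trivial fibrations $W\cap\mathrm{fib}$, and those with the right lifting property against $\mathcal J$ are exactly the fibrations; these two identities supply the recognition-theorem hypotheses $\mathrm{inj}(\mathcal I)=W\cap\mathrm{inj}(\mathcal J)=\mathrm{inj}(\mathcal I)$ directly. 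The same transposition shows $\mathcal J\subseteq\mathrm{cof}(\mathcal I)$, since horn inclusions are cofibrations of simplicial sets, whence $\mathrm{cof}(\mathcal J)\subseteq\mathrm{cof}(\mathcal I)$. Fibrancy of every object \ref{appendix:mainthm:item2} is then immediate, as $\MapsSet(t,-)$ carries the map from any $C$ to the terminal object to a map from a Kan complex to a point. For \ref{appendix:mainthm:item4}, compactness of the test objects gives $\MapsSet(t,\colim_\alpha C_\alpha)\cong\colim_\alpha\MapsSet(t,C_\alpha)$ over filtered $\alpha$, and filtered colimits of weak homotopy equivalences in $\s\Set$ are again weak homotopy equivalences.

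The remaining, and main, obstacle is the acyclicity condition $\mathrm{cof}(\mathcal J)\subseteq W$: every relative $\mathcal J$-cell complex must be a weak equivalence. Here I would exploit that a horn inclusion $\Lambda^k[n]\hookrightarrow\Delta[n]$ is a strong simplicial deformation retract. Since $t\otimes-$ is a simplicial functor compatible with the relevant tensors, so that $t\otimes(\Delta[n]\times\Delta[1])\cong(t\otimes\Delta[n])\otimes\Delta[1]$, each generating trivial cofibration $t\otimes\Lambda^k[n]\to t\otimes\Delta[n]$ is again a strong simplicial deformation retract inclusion. This property is stable under pushout along an arbitrary map, because tensoring with $\Delta[1]$ is a left adjoint and hence preserves the pushout, so the retraction and the homotopy push out; and it is visibly stable under retracts. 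Finally, any simplicial functor preserves simplicial homotopy equivalences, so applying $\MapsSet(s,-)$ to such a pushout yields a simplicial homotopy equivalence of simplicial sets, in particular a weak equivalence; thus every pushout of a map in $\mathcal J$ lies in $W$. Transfinite compositions land in $W$ by \ref{appendix:mainthm:item4} already established, and retracts by closure of $W$ under retracts, giving $\mathrm{cof}(\mathcal J)\subseteq W$ and completing the hypotheses of the recognition theorem; the descriptions in \ref{appendix:mainthm:item1} and \ref{appendix:mainthm:item3} then hold by construction. The one genuinely delicate point is precisely this acyclicity step, where one must use the simplicial deformation-retract structure rather than any naive preservation of pushouts by $\MapsSet(t,-)$, which fails.
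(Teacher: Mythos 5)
Your overall architecture coincides with the paper's: verify the hypotheses of the recognition theorem \cite[Thm.~11.3.1]{Hirschhorn2003Model}, identify $\mathrm{inj}(\mathcal I)$ and $\mathrm{inj}(\mathcal J)$ by transposing lifting problems through the adjunction $\MapsSet(t\otimes M,C)\cong\MapsSet(M,\MapsSet(t,C))$, and obtain acyclicity by exhibiting the maps of $\mathcal J$ as inclusions of deformation retracts, which are stable under pushout and carried to weak equivalences by the simplicial functors $\MapsSet(t,-)$. However, the step you yourself single out as the delicate one contains a genuine error: you claim that the horn inclusion $\Lambda^k[n]\hookrightarrow\Delta[n]$ is a strong \emph{simplicial} deformation retract and then transport that structure along $t\otimes(-)$. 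This is false. There is in general no simplicial retraction $\Delta[n]\to\Lambda^k[n]$ at all: for example, a retraction $r\colon\Delta[2]\to\Lambda^1[2]$ fixing the vertices would have to send the edge $02$ to an edge of $\Lambda^1[2]$ from $0$ to $2$, and $\Lambda^1[2]$ contains no such edge. The horn inclusion is a weak equivalence and its realization is a topological deformation retract, but it is not a simplicial homotopy equivalence (neither side is Kan), so there is nothing for $t\otimes(-)$ to preserve.

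The correct argument, which is the content of \Cref{appendix:lemma2}, builds the retraction $r\colon t\otimes\Delta[n]\to t\otimes\Lambda^k[n]$ and the homotopy $H$ directly in $\bfD$: by adjunction they amount to extension problems of maps $\Lambda^k[n]\to\MapsSet(t,t\otimes\Lambda^k[n])$ and of partial homotopies into $\MapsSet(t,t\otimes\Delta[n])$ along anodyne maps, and these are solvable precisely because item \ref{appendix:definition:item3} of \Cref{appendix:definition} makes those mapping spaces Kan complexes. A tell-tale sign that something is off in your write-up is that hypothesis \ref{appendix:definition:item3} is never used where it is essential: you invoke it only for the lifting-property identifications (where it is actually superfluous, since ``RLP against boundary inclusions $=$ trivial Kan fibration'' holds for arbitrary maps of simplicial sets), whereas it is exactly the input that makes the generating trivial cofibrations into deformation retract inclusions. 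A second, minor, gap of the same flavour: for item \ref{appendix:mainthm:item2} you need $\MapsSet(t,C)$ to be Kan for every $C\in\Ind(\bfD)$, but \ref{appendix:definition:item3} only covers objects of $\bfD$; one must add that $\MapsSet(t,C)=\colim_i\MapsSet(t,c_i)$ is a filtered colimit of Kan complexes and hence Kan. The rest of your proposal (smallness, two-out-of-three, $\mathcal J\subseteq\mathrm{cof}(\mathcal I)$, stability of weak equivalences under filtered colimits, and the closure of the pushout/transfinite-composition argument once the deformation-retract lemma is in place) is correct and matches the paper.
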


In light of this theorem, we will call a map in $\Ind(\bfD)$ a \emph{weak equivalence} if $\MapsSet(t,C) \to \MapsSet(t,D)$ is a weak equivalence for any $t \in \bfT$.

\begin{remark}\label{appendix:remarkTopologicalGenCofibs}
Note that the maps $|\partial \Delta[n]| \hookrightarrow |\Delta[n]|$ and $|\Lambda^k[n]| \hookrightarrow |\Delta[n]|$ can be identified with the inclusions $\partial D^n \hookrightarrow D^n$ and $D^{n-1} \times \{0\} \hookrightarrow D^{n-1} \times I$, respectively. In particular, if the minimal cofibration test category $(\bfD,\bfT)$ comes from a topological category as in \Cref{appendix:exampleTopological}, then one can also take
\[\mathcal{I} = \{t \otimes \partial D^n \to t \otimes D^n \mid n \geq 0,\quad t \in \bfT \} \]
and
\[\mathcal{J} = \{t \otimes (D^n \times \{0\}) \to t \otimes (D^n \times I) \mid n \geq 0,\quad t \in \bfT \}\]
as sets of generating (trivial) fibrations, where $D^n$ denotes the $n$-dimensional unit disc.
\end{remark}

\begin{remark}
The somewhat unfortunate name ``minimal cofibration test category'' comes from the fact that if $(\bfD,\bfT)$ is a minimal cofibration test category, then one can form a cofibration test category in the sense of Definition 3.3 of \cite{BlomMoerdijk2020SimplicialProV1} by defining the category of test objects to be the full subcategory $\bfT'$ consisting of objects of the form $t \otimes N$ for any finite simplicial set $N$ and taking as the cofibrations the maps of the form $t \otimes N \cofarrow t \otimes M$ with $N \cofarrow M$ a monomorphism of finite simplicial sets. The trivial cofibrations are then defined as the cofibrations that are also weak equivalences. One can verify that the model structure of \Cref{appendix:mainthm} agrees with the model structure that one obtains from this cofibration test category by applying Theorem 3.9 of \cite{BlomMoerdijk2020SimplicialProV1}. This cofibration test category $(\bfD,\bfT')$ is the smallest structure of a cofibration test category that one can put on $\bfD$ that has the property that $\bfT'$ contains the set $\bfT$.
\end{remark}

The proof relies on the following three lemmas. We call a map $i \colon C \to D$ in a simplicial category that admits tensors by $\Delta[1]$ an \emph{inclusion of a deformation retract} if there exist maps $r \colon D \to C$ and $H \colon D \otimes \Delta[1] \to D$ such that
\begin{enumerate}[(DR1),left=10pt]
    \item $ri = \id_C$
    \item $H$ is a homotopy from $\id_D$ to $ir$ that is constant on $C$.
\end{enumerate}

\begin{lemma} \label{appendix:lemma1}
Inclusions of deformation retracts are stable under pushouts.
\end{lemma}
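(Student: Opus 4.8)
The plan is to build the retraction and the deformation homotopy for the pushed-out map directly from those of the original map, using the universal property of the pushout together with the fact that $-\otimes\Delta[1]$ preserves the relevant pushout. Consider a pushout square
\[
\begin{tikzcd}
C \arrow[r, "f"] \arrow[d, "i"'] & C' \arrow[d, "i'"] \\
D \arrow[r, "g"'] & D'
\end{tikzcd}
\]
in which $i$ is an inclusion of a deformation retract, with retraction $r \colon D \to C$ and homotopy $H \colon D \otimes \Delta[1] \to D$ satisfying (DR1) and (DR2). I want to produce $r'$ and $H'$ exhibiting $i'$ as an inclusion of a deformation retract.

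First I would construct the retraction $r' \colon D' \to C'$. By the universal property of the pushout, a map out of $D'$ is the same datum as a pair of maps out of $C'$ and $D$ that agree after precomposition with $f$ and $i$. I take $\id_{C'} \colon C' \to C'$ and $f r \colon D \to C'$; these are compatible since $f r i = f \id_C = \id_{C'} f$ by (DR1). The induced map $r'$ then satisfies $r' i' = \id_{C'}$ (giving (DR1) for $i'$) and $r' g = f r$, both by construction.

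The key step is the construction of $H' \colon D' \otimes \Delta[1] \to D'$. Here I use that tensoring by the finite simplicial set $\Delta[1]$ commutes with finite colimits (\Cref{appendix:definition}\,\ref{appendix:definition:item2}), so that $D' \otimes \Delta[1]$ is again a pushout, of $C' \otimes \Delta[1] \leftarrow C \otimes \Delta[1] \rightarrow D \otimes \Delta[1]$. To define $H'$ it therefore suffices to give compatible maps on the two factors: on $C' \otimes \Delta[1]$ I take the constant homotopy on $i'$, namely $C' \otimes \Delta[1] \to C' \xrightarrow{i'} D'$, and on $D \otimes \Delta[1]$ I take $g H$. The only real thing to check is that these agree on the overlap $C \otimes \Delta[1]$: the first restricts to the constant homotopy on $i' f = g i$, whereas the second restricts to $g \circ \bigl(H \circ (i \otimes \id)\bigr)$, which by (DR2) — the constancy of $H$ on $C$ — is exactly the constant homotopy on $g i$. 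Thus the two maps glue to a well-defined $H'$. This gluing compatibility is precisely where hypothesis (DR2) for $H$ enters, and I expect it to be the one substantive point; everything else is formal.

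Finally I would verify (DR2) for $H'$ by comparing maps out of the pushout $D'$ and invoking uniqueness. Restricting $H'$ to the endpoint $0$ of $\Delta[1]$ and comparing on the factors $C'$ and $D$ gives $i'$ and $g$ respectively, which agree with the restrictions of $\id_{D'}$, so $H'|_0 = \id_{D'}$ (using $H|_0 = \id_D$). Restricting to the endpoint $1$ gives $i'$ on $C'$ and $g i r$ on $D$; these match the restrictions of $i' r'$, since $i' r' i' = i'$ and $i' r' g = i' f r = g i r$, so $H'|_1 = i' r'$. Constancy of $H'$ on $C'$ is immediate, as $H' \circ (i' \otimes \id)$ is by construction the constant homotopy on $i'$. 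Hence $(i', r', H')$ satisfies (DR1) and (DR2), and the result follows.
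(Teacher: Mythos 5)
Your proof is correct and is essentially the argument the paper appeals to: the paper simply cites the proof of \cite[Prop.~2.4.9]{Hovey1999ModelCats}, which constructs $r'$ and $H'$ on the pushout in exactly the way you do, with the preservation of the pushout by $-\otimes\Delta[1]$ (item \ref{appendix:definition:item2} of \Cref{appendix:definition}, extended to $\Ind(\bfD)$) playing the role of $-\times I$ preserving pushouts of spaces. Your identification of (DR2) as the point that makes the two homotopies glue is exactly the substantive step.
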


\begin{proof}
This is analogous to the proof of \cite[Prop.\ 2.4.9]{Hovey1999ModelCats}.
\end{proof}

\begin{lemma} \label{appendix:lemma2}
Any map in the set $\mathcal{J}$ of item \ref{appendix:mainthm:item3} of \Cref{appendix:mainthm} is an inclusion of a deformation retract.
\end{lemma}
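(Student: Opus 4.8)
The plan is to work with the enriched mapping spaces rather than to transport a deformation retraction from simplicial sets. The naive idea---realize the horn inclusion $\Lambda^k[n]\hookrightarrow\Delta[n]$ as an inclusion of a deformation retract already in $\FinsSet$ and then apply the functor $t\otimes(-)$---does \emph{not} work: for an inner horn such as $\Lambda^1[2]\hookrightarrow\Delta[2]$ there is no simplicial retraction $\Delta[2]\to\Lambda^1[2]$ whatsoever, since the unique nondegenerate $2$-simplex of $\Delta[2]$ cannot be sent to a (necessarily degenerate) $2$-simplex of the horn while fixing the two edges $01$ and $12$. Hence the deformation retract structure on $t\otimes\Lambda^k[n]\to t\otimes\Delta[n]$ must genuinely use the simplicial enrichment of $\bfD$, and specifically property \ref{appendix:definition:item3}.

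Write $j\colon\Lambda^k[n]\hookrightarrow\Delta[n]$ for the horn inclusion and $i=t\otimes j$ for the corresponding map of $\mathcal J$. Put $K=\MapsSet(t,t\otimes\Lambda^k[n])$ and $K'=\MapsSet(t,t\otimes\Delta[n])$; by \ref{appendix:definition:item3} both are Kan complexes. The tensor--hom adjunction identifies a map $t\otimes\Delta[n]\to t\otimes\Lambda^k[n]$ in $\bfD$ with a map $\Delta[n]\to K$ of simplicial sets, and a homotopy $(t\otimes\Delta[n])\otimes\Delta[1]\cong t\otimes(\Delta[n]\times\Delta[1])\to t\otimes\Delta[n]$ with a map $\Delta[n]\times\Delta[1]\to K'$, where the coherence isomorphism $(t\otimes M)\otimes\Delta[1]\cong t\otimes(M\times\Delta[1])$ is supplied by \ref{appendix:definition:item2}. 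Under this dictionary $\id_{t\otimes\Delta[n]}$ corresponds to the unit $\eta'\colon\Delta[n]\to K'$ and the map $i$ to $\eta'\circ j$, so the whole problem is reduced to producing a retraction and a homotopy inside these Kan complexes.

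First I would build the retraction: the unit $\eta\colon\Lambda^k[n]\to K$ adjoint to $\id_{t\otimes\Lambda^k[n]}$ extends along the anodyne map $j$ to a map $\rho\colon\Delta[n]\to K$, since $K$ is a Kan complex. Its adjoint $r\colon t\otimes\Delta[n]\to t\otimes\Lambda^k[n]$ then satisfies $ri=\id$, which is (DR1). Next, for the homotopy, I would consider the restriction map $\MapsSet(\Delta[n],K')\to\MapsSet(\Lambda^k[n],K')$ induced by $j$. Since $j$ is a trivial cofibration and $K'$ is fibrant, the pushout--product axiom in $\s\Set$ shows this map is a \emph{trivial} Kan fibration. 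Both $\eta'$ (adjoint to $\id_{t\otimes\Delta[n]}$) and $i_*\rho$ (adjoint to $ir$) are vertices of its fibre $F$ over $\eta'\circ j$, and $F$ is a contractible Kan complex; hence there is an edge of $F$ joining them. Unwinding the adjunction, such an edge is precisely a homotopy $H$ from $\id_{t\otimes\Delta[n]}$ to $ir$ whose restriction to $t\otimes\Lambda^k[n]$ is constant, i.e. (DR2).

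I expect the one genuinely delicate point to be the homotopy rather than the retraction: one must produce the homotopy \emph{relative to} the subobject $t\otimes\Lambda^k[n]$, and the clean way to guarantee this is to observe that the ``space of fillers'' $F$ is contractible because $j$ is not merely anodyne but also a weak equivalence, so the restriction map is a trivial (not just ordinary) fibration. Everything else is bookkeeping with the tensor--hom adjunction and the coherence isomorphism $(t\otimes M)\otimes\Delta[1]\cong t\otimes(M\times\Delta[1])$. I would finally remark that property \ref{appendix:definition:item3}---the Kan condition on $\MapsSet(t,-)$ for \emph{test} objects $t$---is exactly what makes the argument run, which explains why the statement is made only for the maps of $\mathcal J$.
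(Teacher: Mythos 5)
Your argument is correct and is essentially the paper's own proof: the retraction is obtained by lifting the unit $\Lambda^k[n]\to\MapsSet(t,t\otimes\Lambda^k[n])$ along the anodyne horn inclusion into a Kan complex (item \ref{appendix:definition:item3}), and the homotopy rel $t\otimes\Lambda^k[n]$ by solving the lifting problem against the anodyne pushout-product of $\Lambda^k[n]\hookrightarrow\Delta[n]$ with $\partial\Delta[1]\hookrightarrow\Delta[1]$ into the Kan complex $\MapsSet(t,t\otimes\Delta[n])$. Your reformulation of the second step via the contractible fibre of the trivial fibration $\MapsSet(\Delta[n],K')\to\MapsSet(\Lambda^k[n],K')$ is just the adjoint form of that same lifting problem, so the two arguments coincide.
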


\begin{proof}
Let the map $i \colon t \otimes \Lambda^k[n] \to t \otimes \Delta[n]$ in $\mathcal{J}$ be given. By adjunction, constructing a retract $r$ is equivalent to solving a lifting problem of the form
\[\begin{tikzcd}
\Lambda^k[n] \ar[r] \ar[d,tail,"\sim" rot90] & \MapsSet(t,t \otimes \Lambda^k[n]) \\
\Delta[n] \ar[ur,dashed] &
\end{tikzcd}\]
and constructing the desired homotopy $H$ from $\id_D$ to $ir$ is equivalent to solving a lifting problem of the form
\[\begin{tikzcd}
\Lambda^k[n] \times \Delta[1] \cup_{\Lambda^k[n] \times \partial \Delta[1]} \Delta[n] \times \partial \Delta[1] \ar[r] \ar[d,tail,"\sim" rot90] & \MapsSet(t,t \otimes \Delta[n]) \\
\Delta[n] \times \Delta[1] \ar[ur,dashed] &
\end{tikzcd}\]
This is possible by item \ref{appendix:definition:item3} of \Cref{appendix:definition}.
\end{proof}

\begin{lemma} \label{appendix:lemma3}
Any inclusion of a deformation retract is a weak equivalence in $\Ind(\bfD)$.
\end{lemma}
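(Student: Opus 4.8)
The plan is to show that for every test object $t \in \bfT$, the induced map $i_* \colon \MapsSet(t,C) \to \MapsSet(t,D)$ is a weak homotopy equivalence of simplicial sets; by item~\ref{appendix:mainthm:item1} of \Cref{appendix:mainthm}, equivalently by the definition of the weak equivalences in $\Ind(\bfD)$, this is exactly what it means for $i$ to be a weak equivalence. The idea is to transport the deformation-retract data $(r,H)$ along the corepresentable functor $\MapsSet(t,-) \colon \Ind(\bfD) \to \s\Set$, which is an $\s\Set$-enriched functor, and thereby exhibit $i_*$ as a \emph{simplicial} homotopy equivalence.

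First I would record the two pieces of structure. Applying $\MapsSet(t,-)$ to $r \colon D \to C$ and to the relation $ri = \id_C$ immediately yields maps $i_* \colon \MapsSet(t,C) \to \MapsSet(t,D)$ and $r_* \colon \MapsSet(t,D) \to \MapsSet(t,C)$ satisfying $r_* i_* = \id$. For the homotopy, the key move is to rewrite $H \colon D \otimes \Delta[1] \to D$ through the tensor--hom adjunction $\MapsSet(D \otimes \Delta[1], D) \cong \MapsSet(\Delta[1], \MapsSet(D,D))$ defining the tensor in $\Ind(\bfD)$, obtaining an edge $\widetilde H \colon \Delta[1] \to \MapsSet(D,D)$ from $\id_D$ to $ir$. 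Postcomposing $\widetilde H$ with the structure map $\MapsSet(D,D) \to \MapsSet\bigl(\MapsSet(t,D),\MapsSet(t,D)\bigr)$ of the enriched functor $\MapsSet(t,-)$ produces an edge from $\id_{\MapsSet(t,D)}$ to $i_* r_*$ in this internal hom. By the cartesian closure of $\s\Set$, such an edge is the same datum as a simplicial homotopy $\MapsSet(t,D) \times \Delta[1] \to \MapsSet(t,D)$ from the identity to $i_* r_*$.

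Combining the two observations, $i_*$ and $r_*$ are mutually inverse up to simplicial homotopy: $r_* i_* = \id$ strictly and $i_* r_* \simeq \id$. Hence $i_*$ is a simplicial homotopy equivalence, and therefore a weak homotopy equivalence, since geometric realization carries a simplicial homotopy to a genuine homotopy of spaces (using $|A \times B| \cong |A| \times |B|$ in compactly generated spaces). As this holds for every $t \in \bfT$, the map $i$ is a weak equivalence in $\Ind(\bfD)$.

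The step I expect to require the most care is the translation of the homotopy from its tensored form $D \otimes \Delta[1] \to D$ into an honest simplicial homotopy of mapping spaces. This rests on two facts worth stating cleanly: that the tensor in $\Ind(\bfD)$ satisfies the defining adjunction with $\MapsSet(D,D)$ (so that $H$ and $\widetilde H$ correspond), and that $\MapsSet(t,-)$ is genuinely $\s\Set$-enriched, so that its action on hom-objects sends $\widetilde H$ to an edge with the expected endpoints $\id$ and $i_* r_*$. I would note in passing that the ``constant on $C$'' clause (DR2) plays no role here; only $ri = \id_C$ and the existence of the homotopy $\id_D \simeq ir$ are used.
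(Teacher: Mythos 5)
Your argument is correct and is exactly the paper's proof, which consists of the single observation that $\MapsSet(t,-)$ is a simplicial functor and hence preserves deformation retracts; you have simply unpacked that sentence into its constituent adjunction and enrichment steps. Your closing remark that (DR2) is not needed here is also accurate — the ``constant on $C$'' condition is used only for the pushout-stability in \Cref{appendix:lemma1}, not for this lemma.
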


\begin{proof}
This follows since $\MapsSet(t,-) \colon \Ind(\bfD) \to \s\Set$ is a simplicial functor, hence it preserves deformation retracts.
\end{proof}

\begin{proof}[Proof of \Cref{appendix:mainthm}]
The fact that $\Ind(\bfD)$ is a simplicial category that is complete, cocomplete and that admits all (co)tensors is explained in \cite[\S 2.2]{BlomMoerdijk2020SimplicialProV1}. Item \ref{appendix:mainthm:item4} follows since $\MapsSet(t,-)$ preserves filtered colimits for every $t \in \bfT$ and since weak equivalences in $\s\Set$ are stable under filtered colimits.

We now prove that the model structure exists by checking all items of Theorem 11.3.1 of \cite{Hirschhorn2003Model}. It is clear that the class of weak equivalences satisfies the two out of three property and is closed under retracts. The sets $\mathcal{I}$ and $\mathcal{J}$ permit the small object argument since both sets consist of maps between objects that are compact in $\Ind(\bfD)$. It is furthermore clear that any map in $\mathcal{J}$ lies in the saturation of $\mathcal{I}$, since the saturation of $\mathcal{I}$ includes all maps of the form $t \otimes M \cofarrow t \otimes N$ where $M \cofarrow N$ is a monomorphism of simplicial sets and $t \in \bfT$.

It follows from \Cref{appendix:lemma1,appendix:lemma2,appendix:lemma3} that any pushout of a map in $\mathcal{J}$ is a weak equivalence, and transfinite compositions of such maps are again weak equivalences since weak equivalences are stable under cofiltered limits. In particular, any map in the saturation of $\mathcal{J}$ is a weak equivalence.

It follows by adjunction that a map $C \to D$ has the right lifting property with respect to the maps in $\mathcal{J}$ if and only if $\MapsSet(t,C) \to \MapsSet(t,D)$ is a Kan fibration, while it has the right lifting property with respect to the maps in $\mathcal{I}$ if and only if $\MapsSet(t,C) \to \MapsSet(t,D)$ is a trivial Kan fibration. In particular, a map $C \to D$ has the right lifting property with respect to the maps in $\mathcal{I}$ if and only if it is a weak equivalence and has the right lifting property with respect to the maps in $\mathcal{J}$.

To see that every object is fibrant, let $C = \{c_i\} \in \Ind(\bfD)$ be given. For any $t \in \bfT$, one has $\MapsSet(t,C) = \colim_i \MapsSet(t,c_i)$. Since cofiltered limits of Kan complexes are again Kan complexes, we conclude that for any $t \in \bfT$, the simplicial set $\MapsSet(t,C)$ is a Kan complex. In particular, $C$ is fibrant.

Finally, to see that this model structure is simplicial, it suffices to show that for any $t \in \bfT$, any cofibration $M \cofarrow N$ between finite simplicial sets, and any fibration $C \fibarrow D$ in $\Ind(\bfD)$, the map
\[\MapsSet(t \otimes N,C) \to \MapsSet(t \otimes N, D) \times_{\MapsSet(t \otimes M, D)} \MapsSet(t \otimes M,C) \]
is a Kan fibration, which is trivial if $M \cofarrow N$ or $C \fibarrow D$ is. This follows immediately by noting that this map agrees with the pullback-power of $M \cofarrow N$ and $\MapsSet(t,C) \fibarrow \MapsSet(t,D)$.
\end{proof}

\begin{remark}
	In \cite{BlomMoerdijk2020SimplicialProV1}, cofibration test categories with respect to the Joyal model structure on $\s\Set$ were also considered. One can similarly define a \emph{minimal} cofibration test category with respect to the Joyal model structure on $\s\Set$ by weakening item \ref{appendix:definition:item3} of \Cref{appendix:definition} to only require that $\Map(t,c)$ is a quasicategory. One can then prove a theorem analogous to \Cref{appendix:mainthm}, but the proof is slightly more complicated. We leave this as an exercise for the interested reader.
\end{remark}

Finally, it will be useful to have a description of the underlying $\infty$-category of $\Ind(\bfD)$ for a minimal cofibration test category $(\bfD,\bfT)$. We will need the following definition for this.

\begin{definition} \label{appendix:def:FiniteCellComplex}
	Let $(\bfD,\bfT)$ be a minimal cofibration test category. A finite $\bfT$-cell complex is an object $d \in \bfD$ such that the map $\varnothing \to d$ is a finite composition of pushouts of maps of the form $t \otimes \partial \Delta[n] \to t \otimes \Delta[n]$. In other words, one can write $\varnothing \to d$ as a finite composition 
	\[\varnothing = d_0 \to d_1 \to \ldots \to d_m = d\]
	such that for every $0 \leq i < m$, there exists a pushout square of the form
	\[\begin{tikzcd}
		t \otimes \partial \Delta[n] \ar[r] \ar[d] & d_i \ar[d] \\
		t \otimes \Delta[n] \ar[r] & d_{i+1} \ar[ul, phantom, very near start, "\ulcorner"]
	\end{tikzcd} \]
	for some $t \in \bfT$ and $n \geq 0$.
\end{definition}

Denote the full simplicial subcategory of $\bfD$ spanned by the finite $\bfT$-cell complexes by $\mathrm{cell}_\mathrm{fin}(\bfT)$. A proof similar to that of Theorem A.2 of \cite{BlomMoerdijk2020SimplicialProV1} then shows the following.

\begin{proposition}\label{appendix:propUnderlyingInftyCat}
	Let $(\bfD, \bfT)$ be a minimal cofibration test category. Then the underlying $\infty$-category of $\Ind(\bfD)$ is equivalent to $\Indinfty(\hcN(\mathrm{cell}_\mathrm{fin}(\bfT)))$, where $\hcN$ denotes the homotopy coherent nerve of a simplicial category and $\Indinfty$ the ind-completion of an $\infty$-category in the sense of \cite[Def.\ 5.3.5.1]{Lurie2009HTT}.
\end{proposition}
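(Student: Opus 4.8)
The plan is to realize the underlying $\infty$-category of $\Ind(\bfD)$ as the homotopy coherent nerve of its full simplicial subcategory of cofibrant objects, and then to recognize this as an ind-completion via the universal property of $\Indinfty$ from \cite[\S 5.3.5]{Lurie2009HTT}. Since every object of $\Ind(\bfD)$ is fibrant by item \ref{appendix:mainthm:item2} of \Cref{appendix:mainthm}, the full simplicial subcategory $\Ind(\bfD)^c$ of cofibrant objects coincides with that of fibrant–cofibrant objects, and its hom-spaces are Kan complexes; hence the underlying $\infty$-category $\mathcal{M}_\infty$ of the simplicial model category $\Ind(\bfD)$ is $\hcN(\Ind(\bfD)^c)$. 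Every finite $\bfT$-cell complex is cofibrant, so the inclusion $\mathrm{cell}_\mathrm{fin}(\bfT) \hookrightarrow \Ind(\bfD)^c$ of full simplicial subcategories induces a functor $f \colon \hcN(\mathrm{cell}_\mathrm{fin}(\bfT)) \to \mathcal{M}_\infty$. As $\mathcal{M}_\infty$ admits small filtered colimits (being the underlying $\infty$-category of a combinatorial model category), $f$ extends uniquely to a filtered-colimit-preserving functor $F \colon \Indinfty(\hcN(\mathrm{cell}_\mathrm{fin}(\bfT))) \to \mathcal{M}_\infty$, and it suffices to verify the hypotheses of the recognition criterion \cite[Prop.~5.3.5.11]{Lurie2009HTT}: that $f$ is fully faithful, lands in compact objects, and has essential image generating $\mathcal{M}_\infty$ under filtered colimits.

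Full faithfulness of $f$ is immediate, since it is the homotopy coherent nerve of a full inclusion of simplicial categories whose objects are cofibrant–fibrant. For compactness, I would use that each finite $\bfT$-cell complex $d$ is an object of $\bfD$ and is therefore compact in $\Ind(\bfD)$, in the sense that $\MapsSet(d,-)$ commutes with filtered colimits of simplicial sets. Combined with item \ref{appendix:mainthm:item4} — weak equivalences are stable under filtered colimits — this shows that an ordinary filtered colimit in $\Ind(\bfD)$ computes the corresponding $\infty$-categorical filtered colimit in $\mathcal{M}_\infty$: for a filtered diagram $\{X_i\}$ of cofibrant objects one has $\Map_{\mathcal{M}_\infty}(d, \colim_i X_i) \simeq \MapsSet(d, \colim_i X_i) = \colim_i \MapsSet(d, X_i)$, and the last filtered colimit of Kan complexes is a homotopy colimit. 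Hence $d$ is compact in $\mathcal{M}_\infty$.

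It then remains to show that every object is a filtered colimit of finite $\bfT$-cell complexes. Given $X \in \Ind(\bfD)$, I would apply the small object argument to $\varnothing \to X$ to obtain a cofibrant replacement $QX$ for which $\varnothing \to QX$ is a genuine $\mathcal{I}$-cell complex. By compactness of the domains $t \otimes \partial \Delta[n]$ of the generating cofibrations, every attaching map factors through a finite subcomplex, so the poset of finite subcomplexes of $QX$ is filtered with colimit $QX$, and each such finite subcomplex is a finite $\bfT$-cell complex. Invoking item \ref{appendix:mainthm:item4} once more, this ordinary filtered colimit is a homotopy colimit, so in $\mathcal{M}_\infty$ the object $X \simeq QX$ is the filtered colimit of its finite subcomplexes. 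This exhibits the essential image of $f$ as generating $\mathcal{M}_\infty$ under filtered colimits, and \cite[Prop.~5.3.5.11]{Lurie2009HTT} then yields that $F$ is an equivalence. The overall structure mirrors the proof of Theorem A.2 of \cite{BlomMoerdijk2020SimplicialProV1}.

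I expect the main obstacle to be the careful bookkeeping guaranteeing that the ordinary filtered colimits appearing in both the compactness and the generation steps genuinely model $\infty$-categorical filtered colimits; this is precisely where item \ref{appendix:mainthm:item4} is indispensable, and the delicate point is checking that the directed system of finite subcomplexes of the transfinite cell complex $QX$ is filtered with colimit $QX$, which rests on the compactness of the domains of the generating cofibrations.
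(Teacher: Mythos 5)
Your proof is correct and follows essentially the approach the paper intends: the paper gives no details beyond deferring to Theorem A.2 of \cite{BlomMoerdijk2020SimplicialProV1}, and your argument --- identifying the underlying $\infty$-category with $\hcN$ of the (fibrant--)cofibrant objects, checking that finite $\bfT$-cell complexes are compact and generate under filtered colimits (using that item \ref{appendix:mainthm:item4} of \Cref{appendix:mainthm} makes ordinary filtered colimits into homotopy colimits), and invoking Lurie's recognition criterion for $\Indinfty$ --- is precisely that strategy carried out. The delicate points you flag (factoring attaching maps through finite subcomplexes via compactness of the domains of the generating cofibrations, and passing from arbitrary cofibrant objects to cell complexes via $QX$) are exactly the right ones and are handled correctly.
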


\renewcommand*{\bibfont}{\normalfont\footnotesize}
\printbibliography
\end{document}